\newtheorem{theorem}{Theorem}[section]
\newtheorem{corollary}[theorem]{Corollary}
\newtheorem{lemma}[theorem]{Lemma}
\newtheorem{proposition}[theorem]{Proposition}
\theoremstyle{remark}
\newtheorem{remark}[theorem]{\sc Remark}
\theoremstyle{remark}
\theoremstyle{definition}
\newtheorem{definition}[theorem]{Definition}
\theoremstyle{remark}
\newtheorem{example}[theorem]{\sc Example}
\theoremstyle{remark}
\theoremstyle{remark}
\renewcommand{\Box}{\square}    
\newcommand{\cal}{\mathcal}
\newcommand{\aff}{{\mathrm{aff}}}
\newcommand{\Atyp}{{\mathrm{Atyp\hspace{2pt}}}}
\newcommand{\rank}{\mathrm{rank\hspace{2pt}}}
\renewcommand{\int}{{\mathrm{int}}}
\newcommand{\gen}{{\mathrm{gen}}}
\newcommand{\pol}{{\mathrm{pol}}}
\newcommand{\Sing}{{\mathrm{Sing\hspace{1pt} }}}
\newcommand{\im}{{\mathrm{Im\hspace{1pt}}}}
\newcommand{\mult}{{\mathrm{mult}}}
\newcommand{\GL}{{\mathrm{GL}}}
\newcommand{\Mat}{{\mathrm{Mat}}}
\newcommand{\grad}{\mathop{\mathrm{grad}}\nolimits}
\newcommand{\ity}{{\infty}}
\newcommand{\lk}{{\mathbb{C}\mathrm{lk}}}
\newcommand{\rk}{\mathrm{rank\hspace{2pt}}}
\newcommand{\e}{\varepsilon}
\newcommand{\spec}{{\mathrm{spec}}}
\newcommand{\m}{\setminus}
\newcommand{\fin}{\hspace*{\fill}$\Box$\vspace*{2mm}}
\newcommand{\cA}{{\cal A}}
\newcommand{\cB}{{\cal B}}
\newcommand{\cP}{{\cal P}}
\newcommand{\cG}{{\cal G}}
\newcommand{\cH}{{\cal H}}
\newcommand{\cT}{{\cal T}}
\newcommand{\cW}{{\cal W}}
\newcommand{\bC}{{\mathbb C}}
\newcommand{\bP}{{\mathbb P}}
\newcommand{\bZ}{{\mathbb Z}}
\newcommand{\bX}{{\mathbb X}}
\newcommand{\bQ}{{\mathbb Q}}
\begin{document}

\title[Polar degree and vanishing cycles]
 {Polar degree and vanishing cycles}

\author{\sc Dirk Siersma}  

\address{Mathematisch Instituut,     Universiteit Utrecht, Utrecht University, PO
Box 80010, \ 3508 TA Utrecht, The Netherlands.}

\email{D.Siersma@uu.nl}

\author{\sc Mihai Tib\u ar}

\address{Univ. Lille, CNRS, UMR 8524 - Laboratoire Paul Painlev\'e, F-59000 Lille, France.}

\email{mtibar@univ-lille.fr}

\thanks{The authors thank the Mathematisches Forschungsinstitut Obewolfach for supporting this research project through the Research in Pairs program, and acknowledge the support of the Labex CEMPI grant (ANR-11-LABX-0007-01). }

\subjclass[2010]{32S30, 14C17, 32S50, 55R55}


\keywords{singular projective hypersurfaces, polar degree, homaloidal hypersurfaces, nonisolated singularities.}





\begin{abstract}
We prove that the polar degree of  an arbitrarily singular projective hypersurface can be decomposed as a sum of non-negative numbers which quantify local vanishing cycles of two different types.    
This yields lower bounds for the polar degree of any singular projective hypersurface.
 \end{abstract}

%

\maketitle

\setcounter{section}{0}


\section{Introduction}\label{s:intro}
The notion of \emph{polar degree} is primarily defined as the topological degree of the gradient mapping 
\begin{equation}\label{eq:grad}
\grad f : \bP^{n}\m \Sing (V) \to  \check \bP^{n},
\end{equation}
associated to a projective hypersurface $V:= \{ f=0\} \subset \bP^{n}$, for some  homogeneous polynomial $f : \bC^{n+1} \to \bC$.
Thus $\pol(V) := \# (\grad f)^{-1}(l)$ for any generic point $l\in \check \bP^{n}$.

It is known that $\pol(V)$ depends only on $V$ and not on the defining polynomial $f$. This fact was conjectured by Dolgachev \cite{Do}, and proved in \cite{DP},  who considered this invariant under the reason that the gradient mapping \eqref{eq:grad} with $\pol(V) =1$ is a \emph{Cremona transformation}.  The corresponding hypersurfaces $V$ were called \emph{homaloidal}, and Dolgachev classified the projective  plane curves with this property.  The case $\pol(V) =0$ had been studied long ago by Hesse \cite{Hes51, Hes59}, and Gordon and Noether \cite{GN76}; see Example \ref{ex:notcone}. We refer to \cite{Di2} and \cite{Huh} for several remarks about the historical landmarks and for some recent bibliography.

In the beginning of the 2000's, Dimca and Papadima \cite{DP} gave the following topological interpretation:  
\emph{For any projective hypersurface $V$, if $\cH$ is a general hyperplane  with respect to $V$,  then the relative homology $H_{*}(\bP^{n} \m V, (\bP^{n} \m V) \cap \cH)$
 is concentrated in dimension $n$, the  homology $H_{*}(V \m \cH)$
 is concentrated in dimension $n-1$, and one has the equalities: 
\begin{equation}\label{eq:red}
  \pol(V) =  \rk H_{n}(\bP^{n} \m V, (\bP^{n} \m V) \cap \cH) =  \rk H_{n-1}(V \m \cH).
\end{equation}
}

By repeated slicing,  the first equality in formula  \eqref{eq:red} might be expanded as a Cayley-Bacharach type formula with alternating signs, but such a formula is not expected  to provide lower bounds for $\pol(V)$ that one would need for classifying hypersurfaces $V$ with low polar degree. As a breakthrough,  more recently
   June Huh \cite{Huh} obtained positive lower bounds for $\pol(V)$ by using \emph{non-generic hyperplane pencils}, a technique developed in the 2000's \cite{T-conect, T-vcy, Ti-book}
which extends the Lefschetz method to non-generic  pencils having stratified isolated singularities in the base locus. His main result says:

\medskip
\noindent
\textbf{Theorem} \cite[Theorem 2 and its Proof]{Huh}\label{t:pol1mu} 
\emph{Let $V\subset \bP^{n}$ be a hypersurface with isolated singularities.
For any general hyperplane $\cH_{p}$ passing through some singular point $p\in \Sing (V)$, such that $V$ is not a cone of apex $p$, one has:}
\begin{equation}\label{eq:huhkey}
\pol(V) =  \mu^{\left< n-2\right> }_p(V)  + \rk H_{n}(\bP^{n} \m V, (\bP^{n} \m V)\cap \cH_{p})  
\end{equation}

\medskip
Huh's remarkable refinement of Dimca-Papadima's formula \eqref{eq:red} in the particular case of isolated singularities presents $\pol(V)$ as a sum of two non-negative numbers, one of which being the local \emph{Milnor-Teissier number} $\mu^{\left< n-2\right> }_p$ that is well defined whenever $p$ is an isolated singular point of $V$.\footnote{$\mu^{\left< n-2\right> }_p$ denotes the Milnor number at $p$ of the generic hyperplane section through $p$.}  
Then Huh uses the following bound provided by \eqref{eq:huhkey}:
\begin{equation}\label{eq:huhbound}
\pol(V) \ge  \mu^{\left< n-2\right> }_p(V)
\end{equation}
  for showing that there are no  homaloidal hypersurfaces with isolated singularities  besides the smooth quadric and the plane curves found by Dolgachev, and thus confirming a conjecture stated by Dimca and Papadima \cite{DP, Di2}. 
  
 More recently,  the paper  \cite{SST} confirmed the classification list conjectured by Huh \cite[Conjecture 20]{Huh}  of  projective  hypersurfaces  with $\pol(V)=2$ and having isolated singularities. This list consists of  nine plane curves of degrees 3, 4 and 5, and three cubic surfaces. Moreover,  \cite[Theorems  1.4, 4.5 and 4.1]{SST} proved yet  another conjecture of  \cite{Huh} which says that for each fixed polar degree value $\pol (V)$,  the degree  $d$ of $V$  and the number of variables  $n$ have upper bounds.

 \smallskip

Stimulated by the formula \eqref{eq:huhkey}, one may ask the audacious question if there exists a  split-formula for $\pol(V)$ into a sum of  non-negative numbers which may predict and control lower bounds\footnote{As for an upper bound, we have $\pol (V) \le (d-1)^{n}$ for any $V$, which can be easily shown by using the definition \eqref{eq:grad} and the behaviour of the degree of the gradient map under deformations.}
 for $\pol (V)$.  
 
 \bigskip
 
In  our paper we take this challenge and,  considering the general setting of $V$ with any singular locus, we prove that the polar degree  $\pol (V)$ has a split formula into certain non-negative numbers which, remarkably, are all \emph{quantifiers of local vanishing cycles}.
More precisely, Theorem \ref{t:polformula} presents a quantization of $\pol(V)$ as the sum:
\begin{equation}\label{eq:nonneg}
 \pol(V)  = \alpha (V,\cH)   + \beta (V,\cH) ,
\end{equation}
where, as we will define in  \S\ref{s:quantisation},   each of the terms $\alpha (V,\cH)$ and $\beta (V,\cH)$   is a sum of  numbers of local vanishing cycles, but of different types. 

 In order to define the local invariants  which compose  $\alpha (V,\cH)$ and $\beta (V,\cH)$,  all of which depending on a projective hyperplane $\cH\subset \bP^{n}$, we need that this hyperplane $\cH$ is \emph{admissible} with respect to $V$. The set of \emph{admissible hyperplanes}  (see Definition \ref{d:admissible})  includes both the set of generic hyperplane slices  considered in \cite{DP} and the set of non-generic hyperplanes considered  in \cite{Huh}, see Remark \ref{c:Zar}. Roughly, the admissible hyperplanes $\cH$ should have a finite number of isolated tangency points with $V$, in the stratified sense.  Part of these may be \emph{special points}. The set of special points  is a  finite subset of $V$  (Definition \ref{d:special}) defined intrinsically, which extends naturally to any hypersurface $V$ the  notion of  ``isolated singularities'', and is defined in terms of local vanishing cycles. 
 
 The admissibility condition has a second aspect (cf Definition \ref{d:admissible}(ii)) which may be interpreted topologically as the existence of a ``singular polar degree relative to $\cH$''.
  Our new type of  Polar Curve Theorem \ref{t:mainpolar1} insures that the hyperplanes passing through a single special point of $V$ are generically admissible.
 
 
  \smallskip
 Roughly speaking,  the term $\alpha (V,\cH)$ collects the numbers of vanishing cycles of the  isolated stratified singularities  of the slice $V\cap \cH$,  whereas $\beta (V,\cH)$ collects the numbers of the vanishing cycles of the singularities of the $n$-variables polynomial  $P_{\cH} := f_{|\bP^{n}\m \cH}$ which are outside $V$, namely of the  singularities  of $P_{\cH}$ in $\bC^{n}\m V:= \bP^{n}\m (\cH\cup V)$, and of the  singularities  of $P_{\cH}$ ``at infinity'' outside $V$.  All these singularities outside $V$ turn out to be isolated  due to the admissibility conditions (cf Definition \ref{d:admissible}(i) and (ii)).

  \medskip
  
 The change of paradigm in this paper consists in the presentation of the polar degree as the sum of quantifiers of local vanishing cycles. Therefore a key role is played here by the enhanced study of geometric vanishing cycles of polynomial functions done for the proof of Theorem \ref{p:F-concentration}. 

\smallskip
Let us give a brief idea how these local invariants fit together in the above formula \eqref{eq:nonneg} of our Theorem \ref{t:polformula}.
Deleting an admissible  hyperplane $\cH$ yields an affine variety $V \setminus \cH$ 
of affine equation $P_{\cH} = 0$.  It then turns out that the polynomial $P_{\cH} : \bC^{n}\to \bC$ has only isolated singularities  outside $V \setminus \cH$,  both on the affine part $\bP^{n}\m \cH$ and at infinity (Corollary \ref{l:isolated}).   This is possible because we work in the context of singularities at infinity defined in the most general setting of \emph{partial Thom stratifications at infinity}, which avoids the use of the stronger condition of  Whitney (b) regularity,  and in which one can still prove that the vanishing cycles are \emph{localizable} at finitely many points (which are in fact 
 the stratified singularities of a certain proper  extension $\tau$ of   $P_{\cH}$).
It further turns out that the isolated stratified singularities are detectable by the presence of a certain local polar curve, with well-defined local polar multiplicities $\lambda(p,t)$, cf Definition \ref{d:lambdainfty}.

Concerning the term  $\beta(V,\cH)$ of \eqref{eq:nonneg}, Theorem \ref{p:F-concentration} shows  that the reduced homology of  a ``thin tube'' $P_{\cH}^{-1}(D)\subset \bP^{n}\m \cH$ around the fibre $P_{\cH}^{-1}(0) = V \setminus \cH$  is concentrated in dimension $n-1$. It also shows that the top Betti number $b_{n}(P_{\cH}^{-1}(D))$ is the sum of the  polar multiplicities $\lambda(p,t)$  at the isolated $t$-singularities at infinity  outside $V$,  and of the Milnor numbers of the affine isolated singularities of $P_{\cH}$ outside $V \setminus \cH$.

As for the term $\alpha(V,\cH)$ of \eqref{eq:nonneg}, it is the sum of the Milnor-L\^{e} numbers $\alpha_{p}(V,\cH)>0$ of the hyperplane slice $V \cap \cH$ at the isolated non-transversality points $p\in V\cap \cH$, cf \S\ref{ss:sectionalM}. 
The number  $\alpha(V,\cH)$ turns out to be the difference $|\chi(V \setminus \cH) - \chi(V \setminus \cH_{gen})|$, where the Euler characteristic $\chi(V \setminus \cH_{gen})$ is morally the same thing as $\pol(V)$  according to \eqref{eq:pollink}.

We give in \S\ref{ss:computpolar} effective methods for computing the ingredients  $\alpha(V,\cH)$ and $\beta(V,\cH)$. They are used in the explicit computations of several examples, see Examples \ref{ex:betacontrib} and \ref{ex:alphacontrib}.

\smallskip

In the last section \S\ref{s:specV} we discuss lower bounds for the polar degree as a consequence  of the formula  \eqref{eq:alpha} of Theorem \ref{t:polformula}. These are extensions of Huh's  results to non-isolated singularities. In particular they  yield severe obstructions for the existence of homaloidal hypersurface.

 We finally come back to the case $\pol(V) = 0$ first studied by Hesse and Gordon-Noether.  As an application of our results, we show that in this case, if $V$ is not a cone,  then it has no special points. 

\medskip

Our study was also motivated by the rich list of examples of homaloidal hypersurfaces with nonisolated singularities, see e.g. \cite{ES, EKP, CRS, FM, Huh0}.   On the other hand,  the polar degree $\pol(V)$ occurs under several more other significant avatars. The presentation  $\pol(V) = \rk H_{n-1}(V \m \cH)$,
 for a general hyperplane $\cH$, is equivalent to the presentation in terms of the affine complement\footnote{By the Lefschetz slicing method for quasi-projective spaces, one has that the reduced homology $H_{*}(\bC^{n}\m  V)$ is concentrated in dimension $n$, and therefore $\chi(\bC^{n}\m  V) = \chi(\bC^{n}) - \chi(V\m \cH)$.}, 
 as observed in \cite{DP}:
 
 \begin{equation}
 \pol(V) =  \rk H_{n}(\bC^{n}\m V) +  (-1)^{n}
\end{equation}
  where $\bC^{n} = \bP^{n}\m \cH$. The topology of the affine complement  $\bC^{n}\m V$ has been studied notably by Libgober in a series of papers \cite{Li1, Li2, Li3, Li4, LT} etc, and the study was extended to  Alexander modules, e.g. in \cite{Ma1, LM}. In the particular case of an arrangement of hyperplanes $V= V_{\cA}\subset \bP^{n}$, it was noticed in \cite[Cor. 4]{DP} that one also has the presentation: 
  \[   \pol(V) =  \rk H_{n}(\bP^{n}\m V_{\cA}).
  \]
The study of the complement $\bP^{n}\m V_{\cA}$, notably in relation to its combinatorics, is an ample branch of research, with a lot of interesting contributions;  the reader is referred to the relevant literature.  
  The number $\pol(V)$  also appears  in \cite{Ma2} as the middle $L^{2}$ Betti number of the affine complement $\bC^{n}\m V$.

\section{A constrained polar curve theorem}\label{oberwolfach}

Let $f : \bC^{n+1} \to \bC$ be a non-identically zero homogeneous polynomial in fixed variables $x_{0}, x_{1},  \ldots , x_{n}$.  One may assume that $f$  is reduced since the polar degree depends on the reduced structure only, as shown by Dimca and Papadima's \cite{DP}  formula \eqref{eq:red}.
The singular set $\Sing f$ is a cone at the origin $0\in \bC^{n+1}$, i.e. a union of lines passing through the origin, with $\dim 0\le \Sing f \le n-1$.

Let  $\hat l = \sum_{i=0}^{n}a_{i}x_{i}\in \check\bP^{n}$ be a linear form.
\begin{definition}
  One calls
\begin{equation}\label{eq:polarcurve}
 \Gamma(\hat l, f) := \overline{\left\{ x\in \bC^{n+1} \mid \rank 
\begin{bmatrix}
   \frac{\partial f}{\partial x_{0}}(x) &  \frac{\partial f}{\partial x_{1}}(x) & \cdots & \frac{\partial f}{\partial x_{n}}(x) \\
   a_{0} & a_1 & \cdots & a_{n} 
\end{bmatrix} <2 \right\} \m \{ f=0\} }
\end{equation}
the polar locus of $f$ with respect to $\hat l$.
\end{definition}

By definition $\pol(V) := \# (\grad f)^{-1}(\hat l)$ for any general point $\hat  l\in \check \bP^{n}$, thus
  $\pol(V)$ equals the multiplicity  $\mult_{0} \Gamma(\hat l, f)$ at $0\in \bC^{n+1}$ of the polar curve $\Gamma(\hat l, f)$.  Since $f$ is a homogeneous polynomial, this multiplicity is also equal  to the Milnor number of the complex link $\lk_{0}(\{f=0\})$ of the hypersurface germ $(\{f=0\}, 0) \subset (\bC^{n+1}, 0)$, and it is well-known (cf \cite{Le}) that  
$\lk_{0}(\{f=0\})$ is homotopy equivalent to a bouquet of spheres of dimension $n-1$. Therefore $\pol(V)$ is equal to the number of these spheres:
\begin{equation}\label{eq:link}
 \pol(V) = \rk H_{n-1}(\lk_{0}(\{f=0\})).
 \end{equation}

Since $f$ is homogeneous, the local complex link $\lk_{0}(\{f=0\})$ is homeomorphic to the affine set $\{f=0\}\cap \{\hat l=1\} \subset \bC^{n+1}$ for some general $\hat l$, which in turn identifies to $V \m \cH_{\gen}$, where $\cH_{\gen} := \{ \hat l=0\}$ is a general projective hyperplane.
We thus get the equality\footnote{This argument using the polar curve $\Gamma(\hat l, f)$ is different  than the proof developed in \cite{DP} for the corresponding part of the equality \eqref{eq:red}.}:
\begin{equation}\label{eq:pollink}
 \pol(V) = \rk H_{n-1}(V \m \cH_{\gen}).
\end{equation}

In the following we shall identify a hyperplane $\cH\subset \bP^{n}$ to a point $\hat l\in \check \bP^{n}$,
 also viewed as a linear function $\hat l :\bC^{n+1}\to \bC$ (modulo multiplication by a non-zero complex number)  such that $H := \{\hat l = 0\}$ is the associated hyperplane in $\bC^{n+1}$.

\medskip
\noindent
One says that \emph{$V\subset \bP^n$ is a cone of  apex $p$},  for some point $p = [p_{0}; \cdots; p_{n}]\in V$, if the derivatives 
of $f$ satisfy the linear equation $\sum_{i=0}^{n}p_{i}\partial f/\partial x_{i} =0$ over all $\bC^{n+1}$.

Modulo a linear change of coordinates, we can write $p= [1: 0: \cdots : 0]$ and then this condition amounts to: ``the polynomial $f$ does not depend on the coordinate $x_{0}$''. In particular this implies that  $\pol (V) =0$, directly from the definition.  Note that a cone $V$ may have one apex or a linear subspace of apex points in case it is an iterated cone.



\medskip
\noindent
We say that $\cH_{\gen}\subset \bP^{n}$ is a  \emph{generic hyperplane  with respect to $V$} if $\cH_{\gen}$ intersects $V$ transversely in a stratified sense, namely: after endowing $V$ with some Whitney stratification, $\cH_{\gen}$ must be transversal to all strata, in particular avoiding all the 0-dimensional strata. In particular $\cH_{\gen}$  contains no isolated singular points of $V$. It is well-known that the set of generic hyperplanes  with respect to $V$ is a Zariski-open subset of $\check \bP^{n}$.  

We are interested here in 
hyperplanes through some singular point $p\in \Sing (V)$.
Let us  write $\check \bP_{L}^{n-1}$ for the set of hyperplanes $H\subset  \bC^{n+1}$ containing a fixed line $L\subset \bC^{n+1}$  through the origin, and denote by $[L]$ the corresponding point in $\bP^{n}$.

\smallskip

We introduce a new polar curve theorem in such a \emph{non-generic} setting:
\begin{theorem}[Constrained polar curve theorem] \label{t:mainpolar1}\ \\
Let  $f : \bC^{n+1} \to \bC$, $n\ge 2$,  be a  homogeneous polynomial with $\dim \Sing f > 0$.
Let $L\subset \Sing f$ be a singular line such that $V:= \{f=0\}\subset \bP^{n}$ is not a
 cone of apex $[L]$.

  Then there is a Zariski open dense subset $\hat \Omega_{L}\subset \check \bP_{L}^{n-1}$ such that the polar locus $\Gamma (\hat l, f)\subset \bC^{n+1}$ is either a curve for all $\hat l\in\hat \Omega_{L}$, or it is empty for all $\hat l\in\hat \Omega_{L}$.
\end{theorem}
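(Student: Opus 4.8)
The plan is to realise all the polar loci $\Gamma(\hat l, f)$, for $\hat l \in \check\bP^{n-1}_{L}$, simultaneously as the fibres of one algebraic map, and then to extract the dichotomy from the theorem on the dimension of the fibres of a morphism. Fix a vector $v=(v_{0},\ldots,v_{n})$ spanning $L$, so that $[L]=[v_{0}:\cdots:v_{n}]$, and set $g:=\sum_{i=0}^{n}v_{i}\,\partial f/\partial x_{i}$, a homogeneous polynomial of degree $\deg f-1\ge 1$ (note $\deg f\ge 2$, since $\dim\Sing f>0$ excludes the linear case). By the definition of a cone recalled above, the hypothesis that $V$ is not a cone of apex $[L]$ is precisely the statement $g\not\equiv 0$. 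I would then work with the locally closed set $W:=\{g=0\}\setminus\Sing f\subset\bC^{n+1}$: because $g$ is a non-constant polynomial, $\{g=0\}$ is a hypersurface of pure dimension $n$, and because $\dim\Sing f\le n-1$ no component of $\{g=0\}$ is contained in $\Sing f$, so $W$ has pure dimension $n$.

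Next I would consider the restriction of the gradient map $\phi:=(\grad f)|_{W}:W\to\check\bP^{n}$, $x\mapsto[\partial f/\partial x_{0}(x):\cdots:\partial f/\partial x_{n}(x)]$. It is a genuine morphism since $\grad f$ does not vanish on $W\subset\bC^{n+1}\setminus\Sing f$, and it takes values in $\check\bP^{n-1}_{L}$: the equation $g(x)=0$ says exactly that $\grad f(x)$ annihilates $v$, i.e. that the hyperplane $[\grad f(x)]$ contains $L$. The crucial identification is that for every $\hat l\in\check\bP^{n-1}_{L}$ the fibre $\phi^{-1}(\hat l)$ coincides with the set of points $x\notin\Sing f$ at which $\grad f(x)$ is proportional to $\hat l$; indeed, since such an $\hat l$ annihilates $v$, any such $x$ automatically satisfies $g(x)=0$ and hence lies in $W$. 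Therefore $\Gamma(\hat l,f)=\overline{\phi^{-1}(\hat l)}$, so that $\dim\Gamma(\hat l,f)=\dim\phi^{-1}(\hat l)$ for every $\hat l\in\check\bP^{n-1}_{L}$.

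It then remains to apply the fibre-dimension theorem to $\phi$, treating separately the irreducible components $W_{1},\ldots,W_{r}$ of $W$, each of dimension $n$. For each $j$ the restriction $\phi|_{W_{j}}$ is either dominant onto $\check\bP^{n-1}_{L}$, in which case there is a Zariski open dense $U_{j}\subset\check\bP^{n-1}_{L}$ over which the fibres have dimension exactly $\dim W_{j}-\dim\check\bP^{n-1}_{L}=n-(n-1)=1$, or non-dominant, in which case $U_{j}:=\check\bP^{n-1}_{L}\setminus\overline{\phi(W_{j})}$ is open dense (by Chevalley's theorem) and the fibres over it are empty. Setting $\hat\Omega_{L}:=\bigcap_{j}U_{j}$, again Zariski open dense, yields a uniform conclusion over $\hat\Omega_{L}$: if some component maps dominantly then $\Gamma(\hat l,f)$ has dimension $1$ for all $\hat l\in\hat\Omega_{L}$, whereas if no component maps dominantly then $\Gamma(\hat l,f)$ is empty for all $\hat l\in\hat\Omega_{L}$. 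This is precisely the claimed alternative.

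I expect the only real obstacle to be the pure $n$-dimensionality of $W$, which is where the hypotheses enter essentially: it is the non-cone assumption, via $g\not\equiv 0$, that keeps $W$ from being all of $\bC^{n+1}\setminus\Sing f$ (the situation when $V$ \emph{is} a cone of apex $[L]$), and this is exactly what pins the generic fibre dimension to $n-(n-1)=1$ in the dominant case and rules out an intermediate value $0$. The remaining ingredients — Chevalley constructibility and the generic fibre-dimension theorem, applied component-wise to handle a possibly reducible $W$ — are standard, and passing to closures in $\Gamma(\hat l,f)=\overline{\phi^{-1}(\hat l)}$ does not affect the dimension count.
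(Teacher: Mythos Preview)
Your argument is correct and takes a genuinely different, more global route than the paper's. The paper slices: after a coordinate change putting $L$ along the $x_0$-axis, it restricts to the transversal affine hyperplane $K=\{x_0=1\}$, invokes the classical affine polar curve lemma for $g:=f_{|K}$ to obtain a Zariski-open set on which $\Gamma(l,g)\subset K$ is a curve or empty, and then argues via a tubular-neighbourhood corollary that the further intersection with $R=\{\partial f/\partial x_0=0\}$ (a \emph{proper} hypersurface precisely by the non-cone hypothesis) is generically $0$-dimensional; the desired $\Gamma(\hat l,f)$ is the cone over this $0$-dimensional set. You instead package \emph{all} constrained polar loci simultaneously as the fibres of a single morphism $\phi:\{v\cdot\grad f=0\}\setminus\Sing f\to\check\bP_L^{n-1}$ and read the dichotomy directly off the fibre-dimension theorem applied componentwise. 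Your route buys conceptual economy --- one morphism, one dimension count, no auxiliary affine lemma --- while the paper's slicing yields the explicit relation between $\Gamma(\hat l,f)$ and the unconstrained $\Gamma(l,g)$ in $K$ (Lemma~\ref{l:nongenpolar}), information that is re-used later when computing the invariants $\alpha$ and $\beta$ in the examples of \S\ref{ss:computpolar}.

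Two small points of care, neither a real gap. First, the paper's working definition of $\Gamma(\hat l,f)$ in \eqref{eq:polarcurve} takes the closure after removing $\{f=0\}$ rather than $\Sing f$; to match it exactly one should use $W':=\{g=0\}\setminus\{f=0\}$ in place of your $W$. Since $\Sing f\subset\{f=0\}$ by Euler's identity and $\{f=0\}$ is still a proper hypersurface, $W'$ remains pure of dimension $n$ (or empty) and your argument goes through verbatim. Second, your assertion $\dim\Sing f\le n-1$ tacitly uses the standing reducedness assumption on $f$ made at the opening of \S\ref{oberwolfach}; without it $\Sing f$ could have dimension $n$ and swallow whole components of $\{g=0\}$.
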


\begin{remark}\label{r:classicalpolarcurve}
 The classical \emph{local polar curve theorem}\footnote{See \cite[Chapter 7]{Ti-book} for details and bibliography.} \cite{HL} says that if the hyperplane $H_{\gen} := \{ \hat l =0\}\subset \bC^{n+1}$ is general,  then the germ at the  origin  $\Gamma_{0} (\hat l, f)\subset \bC^{n+1}$ of the polar locus is either a curve, or it is empty.  Our  Theorem \ref{t:mainpolar1} involves non-general hyperplanes, but on the other hand it applies to homogeneous polynomials only.
\end{remark}

\smallskip
\subsection{Proof of Theorem \ref{t:mainpolar1}}

By a linear change of coordinates we may assume that $L = \bC\langle (1, 0, \ldots , 0)\rangle$. Consider the transversal hyperplane $K :=\{x_{0}=1\}\subset \bC^{n+1}$ at the point $p = (1, 0, \ldots , 0) \in L$.   Let  $\hat l = \sum_{i=1}^{n}a_{i}x_{i}\in \check\bP^{n}$ be a linear form which does not contain  the variable $x_{0}$.

We shall identify $K$ with $\bC^{n}$, the point $p$ with the origin  of $\bC^{n}$, and the restriction $f_{| K}$ with a polynomial $g : \bC^{n}\to \bC$.
We denote $l := \hat l_{|K} =  \sum_{i=1}^{n}a_{i}x_{i}$.  Let

\[ \Gamma(l, g) := \overline{\left\{ x\in \bC^{n} \mid \rank 
\begin{bmatrix}
   \frac{\partial g}{\partial x_{1}}(x)  & \cdots & \frac{\partial g}{\partial x_{n}}(x) \\
   a_{1}  & \cdots & a_{n} 
\end{bmatrix} <2 \right\} \m \Sing g }
\]
be the polar locus of $g$ with respect to $l$.

\begin{lemma} \label{l:nongenpolar}
The polar locus $\Gamma(\hat l, f)$ is the cone over its slice by $K$, and the following equality holds:

\[  K\cap  \Gamma(\hat l, f)  =   \overline{\Gamma(l, g) \cap \left\{ \frac{\partial f}{\partial x_{0}}=0\right\} \m \{ f=0\}}.
\]
\end{lemma}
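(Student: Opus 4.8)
The plan is to reduce everything to an explicit computation of the $2\times 2$ minors in the chart $K$, using homogeneity for the cone statement and Euler's relation to reconcile the two set-subtractions. Write the polar locus as $\Gamma(\hat l,f)=\overline{M(\hat l,f)\m \Sing f}$, where $M(\hat l,f)$ is the locus where $\begin{bmatrix}\frac{\partial f}{\partial x_0}&\cdots&\frac{\partial f}{\partial x_n}\\0&a_1&\cdots&a_n\end{bmatrix}$ has $\rank<2$, and similarly $\Gamma(l,g)=\overline{M(l,g)\m \Sing g}$. First I would record the cone property: every defining $2\times 2$ minor is homogeneous of degree $d-1$, and $\Sing f$ is a cone, so $M(\hat l,f)\m \Sing f$ is $\bC^{*}$-invariant and its closure $\Gamma(\hat l,f)$ is a cone. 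A cone equals the cone over its slice by the transversal affine hyperplane $K=\{x_0=1\}$ as soon as it has no irreducible component inside the parallel hyperplane at infinity $\{x_0=0\}$; establishing this absence is exactly the first assertion. I note that the non-cone hypothesis guarantees $\frac{\partial f}{\partial x_0}\not\equiv 0$, and that since some $a_j\neq 0$ the column-$0$ minors force $\Gamma(\hat l,f)\subset\{\frac{\partial f}{\partial x_0}=0\}$, a proper subvariety.

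Next comes the slice computation. The $2\times 2$ minors split into two families: those using column $0$, which (as $\hat l\neq 0$, so some $a_j\neq 0$) are jointly equivalent to the single equation $\frac{\partial f}{\partial x_0}=0$; and those using only columns $1,\dots,n$, which are precisely the defining minors of $\begin{bmatrix}\frac{\partial f}{\partial x_1}&\cdots&\frac{\partial f}{\partial x_n}\\a_1&\cdots&a_n\end{bmatrix}$. On $K$ the chain rule gives $\frac{\partial f}{\partial x_i}(1,x_1,\dots,x_n)=\frac{\partial g}{\partial x_i}(x_1,\dots,x_n)$ for $i\geq 1$, so this second family cuts out exactly $M(l,g)$. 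Hence, set-theoretically on $K$,
\[
M(\hat l,f)\cap K \;=\; \Big\{\tfrac{\partial f}{\partial x_0}=0\Big\}\cap M(l,g).
\]

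The heart of the matter is to trade the deletion of $\Sing f$ on the left for the deletion of $\{f=0\}$ on the right, and here Euler's relation is decisive. Evaluating $\sum_i x_i\frac{\partial f}{\partial x_i}=d f$ at $(1,x_1,\dots,x_n)$ yields $\frac{\partial f}{\partial x_0}(1,x_1,\dots,x_n)=d\,g-\sum_{i=1}^{n}x_i\frac{\partial g}{\partial x_i}$. In particular on $\Sing g$ this reads $\frac{\partial f}{\partial x_0}=d\,g$, so $\{\frac{\partial f}{\partial x_0}=0\}\cap \Sing g=\{g=0\}\cap \Sing g$; equivalently $K\cap \Sing f$ corresponds to $\Sing g\cap\{f=0\}$. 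Consequently the boundary locus $\Gamma(l,g)\cap \Sing g$ meets $\{\frac{\partial f}{\partial x_0}=0\}$ only inside $\{f=0\}$, so intersecting $\Gamma(l,g)$ with $\{\frac{\partial f}{\partial x_0}=0\}$ and removing $\{f=0\}$ returns exactly the image of $K\cap(M(\hat l,f)\m \Sing f)$. Taking closures — which, by the cone property, commutes with slicing by $K$ — gives the stated equality.

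The step I expect to be the main obstacle is precisely this closure bookkeeping: one must ensure that no genuine component of the polar locus is created or lost in passing between the two subtractions, i.e. that $\Gamma(\hat l,f)$ has no component in $\{x_0=0\}$ (for the cone assertion) and that no component of $\Gamma(l,g)\cap\{\frac{\partial f}{\partial x_0}=0\}$ lies in $\{f=0\}$ (for the equality). By the Euler computation the latter condition means no polar component lies in $\{l=0\}$; such a component would force the two rows to be proportional, hence $\cH$ tangent to $V$ along a positive-dimensional set, which is the degenerate behaviour excluded by the curve/empty dichotomy that Theorem \ref{t:mainpolar1} targets for the relevant $\hat l$. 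Controlling these potential spurious components through homogeneity and Euler's relation, rather than verifying the minor identity itself, is where the real work lies.
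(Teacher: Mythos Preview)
Your approach matches the paper's: homogeneity gives the cone property, and unpacking the $2\times 2$ minors in the slice $K$ gives the equality. Note however that the paper's displayed definition \eqref{eq:polarcurve} of $\Gamma(\hat l,f)$ subtracts $\{f=0\}$, not $\Sing f$. With that convention both sides of the target equality already delete $\{f=0\}$ before closure, so once your Euler computation shows $\Sing g\cap\{\partial f/\partial x_0=0\}\subset\{g=0\}$ (which is still needed, to reconcile $M(l,g)$ with $\Gamma(l,g)=\overline{M(l,g)\m\Sing g}$), the pre-closure sets on the two sides agree identically. Your last-paragraph worry about components of $\Gamma(l,g)\cap\{\partial f/\partial x_0=0\}$ hiding inside $\{f=0\}$ is therefore an artefact of starting from $\m\Sing f$ and disappears with the paper's convention. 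Your other flagged issue---that the cone-over-slice assertion tacitly requires $\Gamma(\hat l,f)$ to have no irreducible component inside $\{x_0=0\}$---is a genuine point which the paper's brief proof passes over in silence.
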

\begin{proof} The first assertion follows from the remark that $\Gamma(\hat l, f)\subset \bC^{n+1}$ is a homogeneous set and that the hyperplane $K$ is transversal to it. Then the claimed equality follows from the definition \eqref{eq:polarcurve} of the local polar locus $\Gamma(\hat l, f)$, which in our case is a homogeneous set.
\end{proof}

In order to pursue, we need to use certain details of the proof of the classical result, therefore we outline the arguments and refer to \cite{HL, Ti-compo, Ti-book} for the full discussion:
\begin{lemma}[Generic Affine Polar Curve Lemma]  \cite{Ti-compo}, \cite[Theorem 7.1.2]{Ti-book} \label{l:genpolar} \
  \\
Let $g : \bC^{n}\to  \bC$ be a polynomial function.
There is a Zariski open dense subset $\Omega \subset \check \bP^{n-1}$ such that the polar locus $\Gamma(l,g)$ is either a curve for all $l \in \Omega$, or it is empty for all $l \in \Omega$.
\end{lemma}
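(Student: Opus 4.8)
The plan is to recognise the polar locus $\Gamma(l,g)$ as a fibre of a Gauss-type map and then to invoke the classical theorem on the dimension of the fibres of a morphism. First I would dispose of the trivial case in which $g$ is constant: then $\Sing g = \bC^{n}$, the rank set is empty, so $\Gamma(l,g) = \emptyset$ for every $l$ and the statement holds with $\Omega = \check\bP^{n-1}$. Assume henceforth $g$ non-constant. Consider the map
\[
\gamma : \bC^{n}\m \Sing g \longrightarrow \check\bP^{n-1}, \qquad x \longmapsto [\grad g(x)] = [\partial g/\partial x_{1}(x) : \cdots : \partial g/\partial x_{n}(x)].
\]
For $x\notin \Sing g$ we have $\grad g(x)\neq 0$, so the condition $\rank\bigl[\,\grad g(x)\,;\,(a_{1},\ldots,a_{n})\,\bigr]<2$ defining $\Gamma(l,g)$ is equivalent to the proportionality $[\grad g(x)] = [l]$ in $\check\bP^{n-1}$. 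Hence the set appearing inside the closure in the definition of $\Gamma(l,g)$ is exactly the fibre $\gamma^{-1}([l])$, and therefore $\Gamma(l,g) = \overline{\gamma^{-1}([l])}$.

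Next I would set up the dimension count. Since $g$ is non-constant, $\Sing g = \{\grad g = 0\}$ is a proper Zariski-closed subset, so $X := \bC^{n}\m \Sing g$ is irreducible of dimension $n$. Let $Z := \overline{\gamma(X)}\subseteq \check\bP^{n-1}$; it is an irreducible closed subvariety, and $\gamma : X \to Z$ is dominant. By the theorem on the dimension of fibres (in its generic-equidimensionality form), there is a nonempty Zariski-open subset $U\subseteq Z$, contained in $\gamma(X)$, such that for every $[l]\in U$ the fibre $\gamma^{-1}([l])$ is nonempty and of pure dimension $\dim X - \dim Z = n - \dim Z$. The whole statement now splits according to whether $\gamma$ is dominant onto $\check\bP^{n-1}$ or not.

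Finally I would extract the dichotomy. If $\dim Z = n-1$, then, $Z$ being an irreducible closed subvariety of the irreducible $(n-1)$-dimensional space $\check\bP^{n-1}$, we get $Z = \check\bP^{n-1}$; taking $\Omega := U$, which is Zariski-open dense, every fibre $\gamma^{-1}([l])$ with $l\in\Omega$ is pure of dimension $n-(n-1)=1$, so $\Gamma(l,g)=\overline{\gamma^{-1}([l])}$ is a curve for all $l\in\Omega$. If instead $\dim Z < n-1$, then $Z$ is a proper closed subset of $\check\bP^{n-1}$, so $\Omega := \check\bP^{n-1}\m Z$ is Zariski-open dense; for $l\in\Omega$ we have $[l]\notin \overline{\gamma(X)}$, whence $\gamma^{-1}([l])=\emptyset$ and $\Gamma(l,g)=\emptyset$ for all $l\in\Omega$. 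This is precisely the claimed alternative. The conceptual heart, and the only genuinely non-routine step, is the identification $\Gamma(l,g)=\overline{\gamma^{-1}([l])}$, which converts the rank condition into the fibres of a single morphism; once that is in place the result is a direct application of the fibre-dimension theorem. The points requiring care are purely formal: that one must remove $\Sing g$ to make $\gamma$ a morphism and must pass to the closure in the definition of $\Gamma(l,g)$ (neither of which affects the generic fibre dimension), that the generic fibre is \emph{pure} of dimension one so that ``curve'' is justified rather than merely ``dimension at most one'', and that the exceptional set is genuinely Zariski-\emph{open} and dense --- automatic here since in the dominant case $Z$ fills $\check\bP^{n-1}$ and $U$ is open, while in the non-dominant case $\Omega$ is the complement of the proper closed set $Z$.
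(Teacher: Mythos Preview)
Your proof is correct and follows essentially the same approach as the paper: the paper packages the same idea via the relative conormal $T^{*}_{g}\subset \bC^{n}\times\check\bP^{n-1}$ with its second projection $\pi_{2}$, but the open part of $T^{*}_{g}$ is exactly the graph of your Gauss map $\gamma$, and $\pi_{2}$ corresponds to $\gamma$ under this identification. The only cosmetic difference is that the paper invokes generic submersivity (Morse--Sard) to get smooth $1$-dimensional fibres where you invoke the fibre-dimension theorem to get pure $1$-dimensional fibres; both yield the claimed dichotomy.
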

\begin{proof}
The relative conormal of $g$ is defined as:
\begin{equation}\label{eq:conormal}
 T^{*}_g := \overline{\left\{ (x, \cH) \in (\bC^{n}\m \Sing g) \times \check \bP^{n-1} \mid 
   \cH = T_{x}g^{-1}(g(x))\right\} } \subset \bC^{n}\times \check \bP^{n-1}
\end{equation} 
with projections $\pi_{1}: T^{*}_g \to \bC^{n}$ and $\pi_{2}: T^{*}_g \to \check \bP^{n-1}$.

Before taking the  algebraic closure  in \eqref{eq:conormal} one has by definition a nonsingular open variety of dimension $n$ on which the projection 
$\pi_{1}$ is one-to-one. It follows that $T^{*}_g$ is a (singular) variety of dimension $n$ too.

As $\pi_{2}$ is generically regular by the Morse-Sard theorem, we have two possibilities:\\
 either
\begin{enumerate}
\item $\dim \im \pi_{2} < n-1$, thus $\pi_{2}$ is totally singular. In this case the generic polar locus is empty, i.e. we may take as $\Omega$ the interior of the complement of $\im \pi_{2}$,\\
\hspace*{-1.3cm} or
\item $\dim \im \pi_{2} = n-1$.
\end{enumerate}
In case (b), $\im \pi_{2}$ contains a Zariski open dense subset  $\Omega \subset\check\bP^{n-1}$ over which $\pi_{2}$ is a submersion, more precisely the restriction:
\begin{equation}\label{eq:G} 
 {\pi_{2}}_{|G} : G :=    \pi_{2}^{-1}(\Omega)  \cap [(\bC^{n}\m \Sing g)\times \check \bP^{n-1}] \to \Omega \subset \check\bP^{n-1}
\end{equation}
is a submersion. Consequently its fibre is a complex manifold of dimension 1, hence a curve. Since $\Gamma(l,g) = \overline{\pi_{1}(({\pi_{2}}_{|G})^{-1}(l))}$ for $l\in \Omega$,
this proves our lemma.
\end{proof}

Consider now the image  by $\pi_{1}$ of the smooth algebraic set $G$ of dimension $n$ defined in \eqref{eq:G}. 
Since $\pi_{1}$ is generically one-to-one on it, the image $\pi_{1}(G)$ contains an open 
subset of $\bC^{n}$, hence we  have the following consequence of the above proof:
\begin{corollary}\label{c:polaropen}
Assume that $\dim \im \pi_{2} = n-1$. Let $l_{0}\in \Omega \subset \check \bP^{n-1}$ and let $B_{\e}(l_{0}) \subset \Omega \subset \im \pi_{2}$ be a small enough ball centred at $l_{0}\in \check\bP^{n-1}$.
Then the image $\pi_{1}( {\pi_{2}}_{|G}^{-1}(B_{\e}(l_{0})))\subset \bC^{n}$ contains an open  tubular neighborhood of the curve $\pi_{1}( {\pi_{2}}_{|G}^{-1}(l_{0}))  \subset \Gamma(l_{0}, g)$ in  $\bC^{n}$.
\fin
\end{corollary}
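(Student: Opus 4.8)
The plan is to exploit two facts from the proof of Lemma~\ref{l:genpolar}: that ${\pi_2}_{|G}$ is a submersion onto $\Omega$ with one-dimensional fibres, and that $\pi_1$ is generically one-to-one on the smooth $n$-dimensional variety $G$. Since $\dim G = n = \dim \bC^{n}$ and the restriction of $\pi_1$ to $G$ is dominant and generically injective, generic smoothness (in characteristic $0$) furnishes a Zariski-open dense subset $G_{0}\subset G$ on which the differential of $\pi_1$ is an isomorphism, so that ${\pi_1}_{|G_{0}}$ is a local biholomorphism onto an open subset of $\bC^{n}$. Write $\Sigma := G\setminus G_{0}$ for the (proper, closed) ramification locus.

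First I would locate the fibre $C_{0} := {\pi_2}_{|G}^{-1}(l_{0})$ relative to $\Sigma$. By Lemma~\ref{l:genpolar}, $C_{0}$ is a curve. Moreover ${\pi_2}_{|\Sigma}(\Sigma)$ has dimension at most $n-1$, and by upper semicontinuity of fibre dimension the locus of $l_{0}\in\Omega$ over which ${\pi_2}_{|\Sigma}$ has a positive-dimensional fibre is a proper closed subset; after removing it (i.e.\ shrinking $\Omega$, which remains Zariski-open dense) the intersection $C_{0}\cap\Sigma$ is finite for every $l_{0}\in\Omega$. Thus $C_{0}\setminus\Sigma$ is the curve $C_{0}$ with at most finitely many punctures.

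The heart of the argument is a local normal form at each point $x\in C_{0}\setminus\Sigma$. Since ${\pi_2}_{|G}$ is a submersion, near $x$ I can choose holomorphic coordinates $(u,v)\in \bC\times\bC^{n-1}$ on $G$ in which ${\pi_2}_{|G}(u,v)=v$, where $v$ runs over a chart of $\Omega$ centred at $l_{0}$ and the fibre $C_{0}$ is $\{v=l_{0}\}$. Because $x\notin\Sigma$, the map $\pi_1$ restricts to a biholomorphism from a neighbourhood of $x$ onto an open subset of $\bC^{n}$, so $(u,v)\mapsto\pi_1(u,v)$ is a coordinate system near $\pi_1(x)$. In these coordinates ${\pi_2}_{|G}^{-1}(B_{\e}(l_{0}))$ is locally $\{v\in B_{\e}(l_{0})\}$, and its image under $\pi_1$ is the open set $\{\pi_1(u,v): u\ \text{near}\ u_{0},\ v\in B_{\e}(l_{0})\}$. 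This is precisely a tubular neighbourhood of the arc of $\pi_1(C_{0})$ about $\pi_1(x)$: the curve is the locus $v=l_{0}$, while the $n-1$ transverse directions are swept out as $l=v$ ranges over the ball $B_{\e}(l_{0})$. Taking the union of these local tubes over all $x\in C_{0}\setminus\Sigma$ produces an open tubular neighbourhood of $\pi_1(C_{0})$, contained in $\pi_1({\pi_2}_{|G}^{-1}(B_{\e}(l_{0})))$, which is the assertion.

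The step I expect to be the main obstacle is the behaviour at the finitely many ramification points $C_{0}\cap\Sigma$, where $\pi_1$ is not a local biholomorphism and the transverse coordinate $v=l$ degenerates, so the local normal form above is unavailable. Consequently the construction literally yields a tube around $\pi_1(C_{0})$ with the images of these finitely many points removed. For the intended downstream use this is harmless, since a finite set does not affect the sweeping-out of an open neighbourhood nor the homological bookkeeping that follows; one reads ``tubular neighbourhood of the curve'' as referring to its dense regular part, or one checks separately that the tube closes up across the finitely many punctures.
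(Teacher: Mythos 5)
There is a genuine gap, and it stems from working only with the weak fact that $\pi_{1}$ is \emph{generically} one-to-one on $G$. The corollary asserts that for \emph{every} $l_{0}\in \Omega$ the image contains an open tubular neighbourhood of the \emph{whole} curve $\pi_{1}({\pi_{2}}_{|G}^{-1}(l_{0}))$. Your argument delivers this only for $l_{0}$ in a further-shrunken dense open subset of $\Omega$ (needed to make $C_{0}\cap\Sigma$ finite), and even then only a tube punctured at the images of the points of $C_{0}\cap\Sigma$ --- a defect you flag yourself but do not repair: ``one checks separately that the tube closes up'' is exactly the step that is missing, and reading ``tubular neighbourhood'' as referring to the regular part changes the statement. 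Neither weakening is licensed by the corollary as stated.

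The missing observation --- stated explicitly in the proof of Lemma \ref{l:genpolar} (``a nonsingular open variety of dimension $n$ on which the projection $\pi_{1}$ is one-to-one'') --- is that your ramification locus is empty, $\Sigma=\emptyset$. Indeed, $G$ lies over $\bC^{n}\m \Sing g$, and over such points the closure taken in \eqref{eq:conormal} adds no points: the map $x\mapsto T_{x}g^{-1}(g(x))=[\grad g(x)]$ is continuous where $\grad g\neq 0$, so the fibre of $T^{*}_g$ over any $x\notin \Sing g$ is the single graph point. Hence $G$ is precisely the graph of this map over the open set $\bigl\{x\in \bC^{n}\m \Sing g \mid [\grad g(x)]\in \Omega\bigr\}$, and $\pi_{1|G}$ is a biholomorphism onto that open set. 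The corollary is then immediate, with no generic smoothness, no ramification analysis, and no shrinking of $\Omega$: the set ${\pi_{2}}_{|G}^{-1}(B_{\e}(l_{0}))$ is open in $G$, so its $\pi_{1}$-image is open in $\bC^{n}$ and contains the smooth curve $\pi_{1}({\pi_{2}}_{|G}^{-1}(l_{0}))$ in its entirety, and any open set containing a smooth locally closed curve contains a tubular neighbourhood of it. (Your closing remark that a punctured tube would be harmless downstream happens to be true for the dimension count in the proof of Theorem \ref{t:mainpolar1}, where the tube is used only as an $n$-dimensional open set swept out by the curves $\Gamma(l,g)$; but that observation does not prove the corollary as stated.)
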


\subsection{End of the proof of Theorem \ref{t:mainpolar1}}\ \\
 The idea is that a linear function $\hat l\in\hat \Omega_{L}$ should be a sufficiently general linear function
 $l = \hat l_{|K}$ on the hyperplane $K\subset \bC^{n}$.  This may be realized in all details, as follows.
 
By applying  Lemma  \ref{l:genpolar} to the restriction $g = f_{| K}$, which is a polynomial function of $n$ variables, we obtain the existence of a Zariski open dense subset $\Omega \subset \check \bP^{n-1}$  such that $\Gamma(l,g)\subset \bC^{n}$ is either a curve for all $l \in \Omega$, or empty for all $l \in \Omega$.  

In case (a) of the proof of Lemma \ref{l:genpolar}, the polar locus $\Gamma(l,g)$  is empty for all $l\in \Omega$. By Lemma \ref{l:nongenpolar} it then follows that 
the polar locus $\Gamma (\hat l, f)$ is empty too,  for any $\hat l$ in the Zariski open dense subset $\hat \Omega_{L}:= \Omega \subset \check \bP_{L}^{n-1}$,
modulo the identification  $\hat l (x_{0}, x_{1}, \ldots , x_{n}) = l( x_{1}, \ldots , x_{n})$ due to the definition of $\hat \Omega_{L}$. This is the trivial case.\footnote{Let us remark that the polar curve $\Gamma(l,g)$ is not empty whenever the polynomial $g$ has at least one \emph{isolated} singularity, since the generic local polar curve at an isolated hypersurface singularity is not empty. More generally, this holds at \emph{special points} of $V$, see Definition \ref{d:special} and related results in \S\ref{s:specV}.}

\medskip

In  case (b) of the proof of Lemma \ref{l:genpolar},   the polar locus $\Gamma(l,g)$  is a curve for all $l\in \Omega$. 
Then let us consider  the homogeneous set $R := \{\frac{\partial f}{\partial x_{0}} =0\}\subset \bC^{n+1}$ which occurs in Lemma \ref{l:nongenpolar}, and remark  that it contains the line $L$.
There are two possibilities: either $R$ is the entire space $\bC^{n+1}$, which is equivalent to the fact that $f$ does not depend on $x_{0}$, and which is excluded by the hypothesis that $V$ is not a cone of apex $[L]$,
or $R$ is a homogeneous hypersurface germ, and thus  of pure dimension $n$. 

In this latter case, it then follows that the intersection $R\cap K \subset K =\bC^{n}$ is of dimension $n-1$,  recalling that the affine hyperplane $K:= \{ x_{0} =1\}\subset \bC^{n+1}$ is transversal to the $\bC^{*}$-orbits of the points of $R$, and that we have identified $K$ with $\bC^{n}$.
 
By Corollary \ref{c:polaropen}, for any $l_{0}\in \Omega$, the space $\pi_{1}( {\pi_{2}}_{|G}^{-1}(B_{\e}(l_{0}))) \subset\bC^{n}$ contains an open set of dimension $n$ and it is by definition the union of  curves $\bigcup_{l\in B_{\e}(l_{0})}\Gamma (l, g)\m \Sing g$. Therefore $\pi_{1}( {\pi_{2}}_{|G}^{-1}(B_{\e}(l_{0})))$ contains an open  tubular  neighborhood of the curve $\Gamma(l_{0}, g)\m \Sing g$ in  $\bC^{n}$, thus contains an open tubular neighborhood of each irreducible component of the  curve $\Gamma(l_{0}, g)\m \Sing g$, as these components are finitely many.

 Continuing the reasoning, since $R\cap K$ has dimension  $n-1$ in $K= \bC^{n}$, the subset: 
 \[  \{ l\in \check \bP_{L}^{n-1} \mid 
\mbox{ some irreducible component of the  curve } \Gamma(l, g)\m \Sing g \mbox{  is contained in } R\cap K \} \]
 is algebraic of dimension $\le n-2$. Its complement is Zariski open, and we denote it by $\hat \Omega_{L}\subset \Omega \subset \check \bP_{L}^{n-1}$. Therefore  the intersection $\Gamma(l, g)\cap R\subset K$  is of dimension zero for any $l\in \hat\Omega_{L}$.

 By Lemma \ref {l:nongenpolar}, it then follows that  the polar set $\Gamma(\hat l, f)$ is either a curve for all $l\in \hat\Omega_{L} \subset \Omega \subset \check \bP^{n-1}$, or it is empty.
 
 This ends the proof of Theorem \ref{t:mainpolar1}. \fin
 
\begin{remark}\label{r:cone2}
By examining the impact of the ``non-conical''  condition in the above proof, we can deduce what is the  complementary situation with respect to the statement of  Theorem \ref{t:mainpolar1}. Namely:

\emph{If  $V$ is  a cone of apex  $[L]$ then the polar set $\Gamma(\hat l, f)$ is either of pure dimension  $2$ for all
$l\in \hat\Omega_{L} \subset \Omega \subset \check \bP^{n-1}$, or it is empty for all
$l\in \hat\Omega_{L} \subset \Omega \subset \check \bP^{n-1}$.}

Indeed, in case $V$ is  a cone of apex  $[L]$,  our polar  set $\Gamma(\hat l, f)$ is the cone over the generic polar locus  $\Gamma(l, g)$ in the slice $K$, and 
in this slice, the generic polar locus is either a curve or empty, which yields the precise dimensions in the above statement.
\end{remark}

\begin{remark}\label{r:pol=0-noncone}
 The  projective hypersurfaces $V$ which are not cones, but have polar degree $\pol (V) =0$, are particularly interesting and of course related  to the original studies by Hesse, Gordon and Noether referred to in the Introduction.  In this case one can have a dimension 1 generic polar locus  $\Gamma(l, g) \subset K = \bC^{n}$, whereas 
 the affine polar curve $\Gamma(\hat l, f)\subset \bC^{n+1}$ is empty. See Example \ref{ex:notcone}.
 \end{remark} 


\bigskip

\section{Vanishing cycles of polynomial functions}\label{ss:singinfty}

The proof of our  polar degree formula of Theorem \ref{t:polformula} relies on a key result, Theorem \ref{t:finiteThom}, that  we prove in this section. This is based on geometric vanishing cycles of polynomial functions. We use the concept of ``partial Thom stratifications'' (cf \cite{Ti-book})  in order to localize those vanishing cycles which vanish asymptotically.  

\smallskip
Let $P: \bC^n \to \bC$ be a non-constant polynomial function of degree $d$. It is well-known that there is a finite subset of 
the target $\bC$ such that $P$ induces a locally trivial fibration over its complement. The minimal such subset is called  the \emph{set of atypical values} $\Atyp P \subset \bC$.

In our case we consider the polynomial  $P(x_{0}, \ldots, x_{n-1}) := f(x_{0}, \ldots, x_{n-1}, 1)$, and we say that $\tilde P = f$ is the homogenized of $P$ of degree $d$ by the coordinate $x_{n}$

Let
 $\bX := \{f(x_{0}, \ldots, x_{n}) -tx_{n}^{d}=0\} = \{\tilde P-tx_{n}^{d}=0\}  \subset \bP^{n}\times \bC$.
Let   $\tau : \bX \to \bC$ be  the projection on the second factor,  and  let us denote by  $\bX_{t} := \tau^{-1}(t)$ its  fibres.

The set $\bX$ is precisely the closure in $\bP^n \times \bC$ of the graph $\{ (x,t)\in \bC^n\times \bC \mid P(x) =t\}$ of $P$ and    $\bX^{\ity} := \bX \cap (\cH\times \bC) = (V\cap \cH)\times \bC$ is the divisor at infinity, where $\cH := H/\bC^{*}$ and $H := \{ x_{n}=0\} \subset \bC^{n+1}$. One may then identify $\bC^n$ with $\bX \setminus \bX^\ity$ via the canonical map $x \mapsto ([x:1], P(x))$. In particular, $\tau$ is a proper  extension of $P$. The hypersurface  $\bX\subset \bP^n \times \bC$ is covered by affine charts $U\times \bC$, where $U\subset \bP^{n}$ is some affine chart of $\bP^{n}$.

\medskip
The non-isolated singular locus of the fibre  $\bX_{0} =V$ (if there is any) intersects the hyperplane at infinity $\cH$. 
Here we are  interested in another type of singularities, the so-called  \emph{singularities at infinity} of the fibres $\bX_{t}$ for $t\not=0$. 

\smallskip
\subsection{Partial Thom stratifications.} \label{ss:thompartial}

We use the following result, as a particular case of the more general result in reference: 

\begin{theorem}\cite{BMM}\footnote{See also  \cite{Ti-compo}, \cite[Theorem A1.1.7]{Ti-book}.}\label{t:**}.
 Let $X\subset \bC^{N}$ be a complex space endowed with a Whitney stratification,  and let $h$ be a holomorphic function  on $X$  such that its zero locus  is a union of strata. Then this Whitney stratification is also a Thom a$_{h}$-regular stratification of $h^{-1}(0)$.
 \fin
\end{theorem}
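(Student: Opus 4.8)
The statement is local along $h^{-1}(0)$ and only involves pairs of adjacent strata, so my plan is to reduce it to a single pair and then to pin down the one place where complex analyticity is genuinely used. Fix strata $S_\beta\subset \overline{S_\alpha}$ with $S_\beta\subset h^{-1}(0)$ and set $d_\alpha=\dim S_\alpha$. If $h$ vanishes identically on $S_\alpha$, then each level set $(h|_{S_\alpha})^{-1}(h(x))$ is all of $S_\alpha$, its tangent coincides with $T_xS_\alpha$, and the Thom $a_h$-condition degenerates into Whitney's (a)-condition, which holds by hypothesis. Hence I may assume $h|_{S_\alpha}\not\equiv 0$, so $d_\alpha\ge 1$, and the remaining content is: for every sequence $x_i\in S_\alpha$ with $x_i\to y\in S_\beta$ for which the level-set tangents $\tau_i:=T_{x_i}\big((h|_{S_\alpha})^{-1}(h(x_i))\big)$ converge in the Grassmannian of $(d_\alpha-1)$-planes in $\bC^N$ to a limit $\tau$, one has $T_yS_\beta\subseteq\tau$.

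I would first organise the admissible limits $\tau$ geometrically. Let
\[
 \cT:=\overline{\{(x,T_x((h|_{S_\alpha})^{-1}(h(x))))\mid x\in S_\alpha\}}\subset \overline{S_\alpha}\times \mathrm{Gr}(d_\alpha-1,N)
\]
be the relative conormal (incidence) variety of $h$ along $\overline{S_\alpha}$, a complex-analytic set whose projection $p:\cT\to\overline{S_\alpha}$ is proper because the Grassmannian is compact. Every limit $\tau$ arising above is, by construction, a point of the fibre $p^{-1}(y)$, so it suffices to prove $T_yS_\beta\subseteq\sigma$ for every $(y,\sigma)\in p^{-1}(y)$. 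Properness and analyticity of $p$ guarantee that this fibre has controlled dimension and no spurious components, which is what allows the pointwise estimate below to cover all limits simultaneously.

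The crux, and the step I expect to be the main obstacle, is to pass from Whitney's (a)-condition — which controls limits of the \emph{full} tangent spaces — to the Thom condition, which controls the strictly smaller level-set tangents. After passing to a subsequence so that $T:=\lim_i T_{x_i}S_\alpha$ exists, Whitney (a) gives $T_yS_\beta\subseteq T$, while $\tau=\lim_i\ker d(h|_{S_\alpha})_{x_i}$ is a hyperplane of $T$. The tempting first-order argument — that $v\in T_yS_\beta$ forces $dh_y(v)=0$ and hence $v\in\tau$ — fails exactly where it matters, since $\lim_i\ker d(h|_{S_\alpha})_{x_i}$ need not equal the kernel of $dh_y$ when $\grad(h|_{S_\alpha})$ degenerates as $x_i\to y$ (indeed $dh_y$ may vanish on all of $T$ at a singular point of the zero fibre). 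To close this gap one must control quantitatively the rate at which $\grad(h|_{S_\alpha})_{x_i}\to 0$ against the rate of approach to $S_\beta$. I would argue by contradiction: assuming some $v\in T_yS_\beta\setminus\tau$, use the curve-selection lemma to produce a real-analytic arc $x(s)\in S_\alpha$ with $x(s)\to y$ realising both the limit $\tau$ and the escaping direction, then compare the identity $\tfrac{d}{ds}h(x(s))=dh_{x(s)}(\dot x(s))$ with the Whitney (b)-control on the secant directions $\big[\,\overline{x(s)\,y}\,\big]$ and with the complex \L ojasiewicz inequalities relating $|h|$, $|\grad h|$ and the distance to $h^{-1}(0)$. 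It is precisely these \L ojasiewicz estimates that use the complex-analytic hypothesis in an essential way — the analogous real-analytic statement is false — and combining them contradicts $v\notin\tau$.

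Finally I would assemble the local verifications into the global conclusion: the containment $T_yS_\beta\subseteq\tau$ holds for every pair whose smaller stratum lies in $h^{-1}(0)$, the remaining pairs being covered by the degenerate Whitney-(a) case, so the given Whitney stratification is Thom $a_h$-regular as a stratification of $h^{-1}(0)$. For a fully rigorous treatment of the crux one may instead route the argument through the $\mathcal{D}$-module/characteristic-variety computation of \cite{BMM}, where the Thom condition is read off from the characteristic variety of the localisation $\cO_X[h^{-1}]$; the curve estimate above is the geometric shadow of that computation, and both routes isolate the same essential use of complex analyticity.
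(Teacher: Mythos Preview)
The paper does not give its own proof of this statement: it is quoted as a result of Brian\c con--Maisonobe--Merle \cite{BMM} (with pointers to \cite{Ti-compo} and \cite[Theorem A1.1.7]{Ti-book}) and closed immediately with a QED box. There is therefore nothing in the paper to compare your argument against.

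On its own merits, your outline is structurally sound: the reduction to a single pair $S_\beta\subset\overline{S_\alpha}$, the degenerate case $h|_{S_\alpha}\equiv 0$ collapsing to Whitney (a), and the passage to the closure $\cT$ of the relative conormal are all correct and standard. You also correctly isolate the real difficulty --- Whitney (a) controls limits of the full tangents $T_{x_i}S_\alpha$, not of the hyperplanes $\ker d(h|_{S_\alpha})_{x_i}$ inside them --- and you correctly observe that the naive ``$dh_y(v)=0$ hence $v\in\tau$'' argument fails precisely when the gradient degenerates. However, the decisive step, where you claim that curve selection combined with Whitney (b) and the complex \L ojasiewicz inequalities produces a contradiction, is asserted and not carried out: you never write down the chain of estimates that actually forces $v\in\tau$, and that chain is the entire content of the theorem. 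Your closing paragraph effectively acknowledges this by deferring the rigorous argument to the $\cD$-module computation of \cite{BMM}. What you have is a correct diagnosis of where the work lies and a plausible strategy, but not yet a proof; the core estimate remains to be supplied.
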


 The singular locus $\Sing(\bX)$ of the set $\bX$ is included in the divisor at infinity $\bX^{\ity}$ and therefore one may endow $\bX$ with a Whitney stratification such that $\bX^{\ity}$ is a union of strata.
By Theorem \ref{t:**} applied to $h =x_{n}$, this Whitney stratification of $\bX$ is Thom regular at $\bX^{\ity}$ with respect to the levels  of $x_{n}$, in any local chart.  This is a particular case of a \emph{partial Thom stratification at infinity} like introduced in \cite{Ti-compo}, \cite[Def. 2.1]{Ti-equi},  see also \cite[Appendix 1.1]{Ti-book}, \cite{DRT},  \cite{ART}. 
\begin{definition}(Partial Thom  stratification at infinity of $P$, \cite{Ti-compo},  \cite[Def. 2.1]{Ti-equi}, \cite[Def. 9.1.3]{Ti-book}.)
A locally finite stratification of $\bX^{\ity}$ such that each stratum is Thom (a$_{x_{n}}$)-regular with respect to the smooth stratum $\bX\m \bX^{\ity}$ is called a  \emph{$\partial$-Thom  stratification at infinity}.  This is independent on the affine chart where one considers the function $x_{n}$ and its levels, cf \cite[Theorem 3.6]{Ti-compo}.
\end{definition}
The advantage in working with partial Thom stratifications is that one does not require the more involved Whitney regularity, whereas one may still prove fibration theorems, cf \cite{Ti-compo}  \cite[Appendix 1]{Ti-book}.

\begin{definition}\label{d:t-sing}($t$-singularities at infinity.) \cite{ST-singinf}, \cite{Ti-compo}, \cite[Definition 1.2.10]{Ti-book}.
Let $\cG$ be a $\partial$-Thom  stratification at infinity of $\bX$, and let $\eta \in \bX^{\ity}$.  If the map $\tau : \bX \to \bC$ is transversal to the stratification $\cG$ at $\eta$ then we say that $P$ is \emph{$t$-regular at infinity} at this point.  If non-transversality occurs instead, then we say that $P$ has a \emph{$t$-singularity at infinity} at $\eta$.

We say that $\eta$ is an \emph{isolated $t$-singularity} at infinity of $P$ if the map $\tau : \bX \to \bC$ has an isolated non-transversality at $\eta$ with respect to  $\cG$, and if moreover the map $\tau$ has no other singularity on $\bX \m \bX^{\ity}$ in the neighborhood of $\eta$.
\end{definition}

\subsection{The isolated non-transversality condition, and Thom regularity.}\label{ss:non-transv}\label{ss:Thomreginf}

We fix a Whitney stratification $\cal W$ of the homogeneous hypersurface $\{f=0\}\subset \bC^{n+1}$. We may and will  assume that $\cal W$ is a homogeneous stratification, and recall that the strata of the coarsest Whitney stratification are $\bC^{*}$-invariant.

We consider the origin of $\bC^{n+1}$ as a point-stratum and denote by $\cal W^{*}$ the union of all strata different from $\{0\}$. Then the projectivized stratification  $\bP \cal W := \cal W^{*}/\bC^{*}$ yields a Whitney stratification of the projective hypersurface $V$.
Let us also recall  that the Whitney stratification $\bP \cW$ of the hypersurface $V\subset \bP^{n}$  depends on the \emph{reduced} structure of $V$ only.

Let  $H:= \{\hat l=0\}$ be a hyperplane through $0\in \bC^{n+1}$, together with its associated projective  hyperplane $\cH := H/\bC^{*} \subset \bP^{n}$ which satisfies the following property with respect to the stratification $\bP \cW$ of $V$ that we will call  \emph{isolated non-transversality}:


\begin{equation}\label{eq:star}
 \cH \mbox{ \textit{is transversal to all strata of} } \bP\cal W \mbox{ \textit{except at finitely many points.} }
\end{equation}

\medskip

 Let us denote by $\Sing_{\bP \cW}(V\cap \cH) = \{p_{i}\}_{i=0}^{r} \subset \bP^{n}$ the set of points of non-transversality in \eqref{eq:star}, and by $L_{i}\subset \bC^{n+1}$ the line which corresponds to $p_{i}$.

By the transversality property \eqref{eq:star}, the induced stratification $H\cap \cal W^{*}$ on $H\cap \{f=0\}\m \bigcup_{0}^{r} L_{i}$ is  Whitney regular.  It extends to a homogeneous  Whitney regular stratification of $H\cap \{f=0\}$ where  
the lines $L_{i}$ are the Whitney strata of dimension 1. We denote by  $\cW_{H}$ this Whitney stratification, and by $\bP \cW_{H} := \cW_{H}/\bC^{*}$ the corresponding Whitney stratification of $V\cap \cH$. Its 0-dimensional strata are precisely the points $\{p_{i}\}_{i=0}^{r}$.


We consider the map $(\hat l,f)  : (\bC^{n+1},0) \to (\bC^{2}, 0)$.  Modulo  a linear change of coordinates, one may assume that $\hat l = x_{n}$. We thus consider the affine map $(x_{n},f): \bC^{n+1} \to \bC^{2}$ and the Thom regularity condition of this map  at its central fibre\footnote{It is the inverse image of the origin that is called here ``central fibre''.}. One defines the \emph{Thom non-regularity locus}\footnote{See  \cite{PT}.} as the set of points of the central fibre of a map where the Thom regularity condition fails with respect to some stratification defined  in the preamble, which in our case is $\cW_{H}$.

\begin{lemma}\label{l:thom}
If the condition \eqref{eq:star} holds, then the stratification $\cW_{H}$ is a Thom $\mathrm{a}_{(\hat l, f)}$-regular stratification of the central fibre $H\cap f^{-1}(0) \subset \bC^{n+1}$ at all points outside the union of lines  $\bigcup_{0}^{r} L_{i}$. 
In other words, the Thom $\mathrm{a}_{(x_{n}, f)}$ non-regularity locus of $(x_{n},f)$ with respect to $\cW_{H}$  is at most $\cup_{i=0}^{r} L_{i}$.
\end{lemma}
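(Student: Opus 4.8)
The plan is to deduce the Thom $\mathrm{a}_{(x_{n},f)}$-regularity of $\cW_{H}$ from the Thom $\mathrm{a}_{f}$-regularity of $\cW$, which Theorem \ref{t:**} already grants, by using the transversality in \textbf{(*)} to govern limits of tangent hyperplanes. Fix a point $x_{0}\in (H\cap\{f=0\})\m\bigcup_{i=0}^{r}L_{i}$ and let $S$ be the stratum of $\cW$ containing it. Since $x_{0}\notin\bigcup_{i=0}^{r}L_{i}$, it lies in a stratum of $\cW^{*}$ and, by \textbf{(*)}, $H$ is transversal to $S$ at $x_{0}$; hence $T_{x_{0}}S\not\subseteq H$, the stratum of $\cW_{H}$ through $x_{0}$ is $S_{H}:=H\cap S$, and $T_{x_{0}}S_{H}=H\cap T_{x_{0}}S$.

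First I would take an arbitrary sequence $y_{k}\to x_{0}$ of regular points of $(x_{n},f)$, i.e. points where $\mathrm{d}x_{n}$ and $\mathrm{d}f_{y_{k}}$ are linearly independent; in particular each $y_{k}$ is a regular point of $f$, so $\ker\mathrm{d}f_{y_{k}}$ is an $n$-plane and $\ker\mathrm{d}x_{n}\cap\ker\mathrm{d}f_{y_{k}}=H\cap\ker\mathrm{d}f_{y_{k}}$ is the $(n-1)$-plane tangent to the fibre of $(x_{n},f)$ through $y_{k}$. Passing to a subsequence I may assume $H\cap\ker\mathrm{d}f_{y_{k}}\to\tau$ and $\ker\mathrm{d}f_{y_{k}}\to\sigma$ in the respective Grassmannians, with $\dim\tau=n-1$ and $\dim\sigma=n$. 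The inclusions $H\cap\ker\mathrm{d}f_{y_{k}}\subseteq H$ and $H\cap\ker\mathrm{d}f_{y_{k}}\subseteq\ker\mathrm{d}f_{y_{k}}$ survive in the limit, giving $\tau\subseteq H\cap\sigma$; and the Thom $\mathrm{a}_{f}$-regularity of $\cW$ (together with its Whitney property, to cover the case $y_{k}\in\{f=0\}$) yields $T_{x_{0}}S\subseteq\sigma$.

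The crux is to promote $\tau\subseteq H\cap\sigma$ to an equality, and this is exactly where \textbf{(*)} is used. If one had $\sigma=H$, then $T_{x_{0}}S\subseteq\sigma=H$ would contradict the transversality of $H$ to $S$ at $x_{0}$; therefore $\sigma\neq H$, the two distinct $n$-planes $H$ and $\sigma$ meet along an $(n-1)$-plane, and the inclusion $\tau\subseteq H\cap\sigma$ of equidimensional spaces forces $\tau=H\cap\sigma$. Consequently $T_{x_{0}}S_{H}=H\cap T_{x_{0}}S\subseteq H\cap\sigma=\tau$, which is precisely the Thom $\mathrm{a}_{(x_{n},f)}$-condition at $x_{0}$. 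As $x_{0}$ ranges over $(H\cap\{f=0\})\m\bigcup_{i=0}^{r}L_{i}$, this confines the non-regularity locus to $\bigcup_{i=0}^{r}L_{i}$.

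I expect the main obstacle to be precisely the degenerate configuration $\sigma=H$: a priori the limit $\sigma$ of the tangent hyperplanes to the $f$-fibres could coincide with $H$, in which case $H\cap\sigma=H$ is $n$-dimensional and the dimension count collapses. The whole argument hinges on the observation that, through Thom $\mathrm{a}_{f}$-regularity, the equality $\sigma=H$ translates into $T_{x_{0}}S\subseteq H$, i.e. tangency of the stratum $S$ to $H$ at $x_{0}$ --- which is exactly the stratified non-transversality forbidden by \textbf{(*)} away from the points $p_{i}$. A minor secondary point is the bookkeeping of which ambient stratum each $y_{k}$ inhabits (off $\{f=0\}$, on its smooth part, or on the smooth part of the central fibre), but in every case the inclusion $T_{x_{0}}S\subseteq\sigma$ follows from the Thom $\mathrm{a}_{f}$- or Whitney-regularity of $\cW$ and $\cW_{H}$, so these cases need no separate treatment.
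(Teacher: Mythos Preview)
Your proposal is correct and follows essentially the paper's own route: obtain $T_{x_{0}}S\subseteq\sigma$ from the Thom $\mathrm{a}_{f}$-regularity of $\cW$ granted by Theorem~\ref{t:**}, then use the transversality in \textbf{(*)} to rule out $\sigma=H$ and force $\tau=H\cap\sigma\supseteq H\cap T_{x_{0}}S=T_{x_{0}}S_{H}$. The only cosmetic difference is that for the secondary case $y_{k}\in\{f=0\}$ the paper makes a second appeal to Theorem~\ref{t:**} (applied to $x_{n}$ on $\{f=0\}$) rather than to Whitney regularity, which is exactly the ``bookkeeping'' you flag in your last paragraph.
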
 

\begin{proof}
Point-strata of $\bP \cW$ may be either outside $\cH$, or inside it, and then these are among the non-transversality points $p_{i}$.  Excluding the union of lines $\cup_{i=0}^{r} L_{i}$, we thus consider some point $y_{0}\in H\cap W \subset H\cap f^{-1}(0)$, where $W\in \cW$ is a stratum of dimension $\ge 2$. By the transversality condition \eqref{eq:star}  we have $H\pitchfork_{y_{0}} W$, and so $\dim H\cap W \ge 1$.

Let $y_{k}\in \bC^{n+1}\m f^{-1}(0)$ be some sequence of points $y_{k}\to y_{0}$. Theorem \ref{t:**}  applied to $f$ and to the Whitney stratification $\cW_{H}$ implies that the limit of tangent hyperplanes
$\lim_{y\to y_{0}} T_{y}f^{-1}(f(y))$, whenever it exists,  contains $T_{y_{0}}W$. Since $H$ is transversal to $T_{y_{0}}W$, it  follows that the tangent space of the fibre of $(\hat l,f)$ passing through some point $y$ close enough to $y_{0}$ is the transversal intersection of two tangent spaces:  the tangent space to the fibre of $\hat l$,  and the tangent space to the fibre of  $f$,
since  this converges to the intersection $H\cap \lim_{y\to y_{0}} T_{y}f^{-1}(f(y))$ which contains $H \cap T_{y_{0}}W$. 

Let us also consider a sequence $y_{k}\to y_{0}$ with $y_{k}$ belonging to some stratum of the Whitney stratification $\cW$ of $\{f=0\}$. 
We then apply Theorem \ref{t:**}  to the function $\hat l = x_{n}$ on the space  $\{f=0\}$ and we conclude that the Thom regularity holds at the point $y_{0}$.

This shows that $H \cap W$ is a Thom regular stratum for the map $(\hat l,f)$. 
 \end{proof}

We shall call  $\cH = \{ x_{n} =0\}$ the \emph{hyperplane  at infinity} for the polynomial $P_{\cH} := f(x_{0}, \ldots , x_{n-1}, 1)$, such that  $P_{\cH}: \bC^{n}\to \bC$ is a polynomial of degree $d$.
Thus $f = \tilde P_{\cH}$ is the homogenized of degree $d$ of $P_{\cH}$ by the variable $x_{n}$.  Writing
\[ f(x_{0}, \ldots , x_{n}) = f_{d}( x_{0}, \ldots , x_{n-1}) + x_{n}f_{d-1}( x_{0}, \ldots , x_{n-1}) + \cdots ,
\]
we have $\cH\cap \Sing (V) = \{\partial f_{d}=0, f_{d-1}=0\}\subset \cH \simeq \bP^{n-1}$, and thus  $\Sing(\bX) = (\cH\cap \Sing (V)) \times \bC \subset \bX^{\ity}$.  
We therefore consider the product stratification $\bP \cW_{H}\times \bC$ of $\bX^{\ity}$,  the strata of which are the products $\hat W\times \bC$ for all $\hat W\in \bP \cW_{H}$.

We are now ready to state our key result on the $\partial$-Thom stratification at infinity introduced in \S\ref{ss:thompartial}, under the condition \eqref{eq:star} given in  \S\ref{ss:non-transv}, also expressed in terms of the $t$-regularity, cf Definition \ref{d:t-sing}:

\begin{theorem}\label{t:finiteThom}
If the isolated non-transversality condition \eqref{eq:star} holds,  then the product stratification $\bP \cW_{H}\times \bC$ of $\bX^{\ity}$  is a partial Thom stratification at infinity of $P_{\cH}$, possibly except at finitely many points on the lines $\{ p_{i}\}_{i=0}^{r} \times \bC \subset \bX^{\ity}$.

In particular,  the polynomial $P_{\cH}$ is $t$-regular at infinity at all points of $\bX^{\ity}$, except of finitely many points on the lines $\{ p_{i}\}_{i=0}^{r} \times \bC \subset \bX^{\ity}$, $i=0, \ldots , r$.
\end{theorem}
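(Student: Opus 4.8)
The assertion is local on $\bX^{\ity}$ and stratumwise, so the plan is to fix a point $(\eta,t_{0})\in\bX^{\ity}$ with $\eta$ in a stratum $\hat W\in\bP\cW_{H}$ and to prove the single Thom $\mathrm{a}_{x_{n}}$-regularity of $\hat W\times\bC$ with respect to $\bX\m\bX^{\ity}$ there. Choosing an affine chart of $\bP^{n}$ around $\eta$ (say $x_{0}=1$, with $\eta$ having last coordinate $u_{n}=0$) and writing $\phi$ for the dehomogenisation of $f$ in this chart, the affine part $\bX\m\bX^{\ity}$ is the graph $t=\phi(u)/u_{n}^{\,d}$ over $\{u_{n}\neq0\}$; hence for $c\neq0$ the $x_{n}$-fibre $\{u_{n}=c\}\cap(\bX\m\bX^{\ity})$ is smooth and its tangent plane $P$ at a point is the graph of $\tfrac1{c^{d}}\,d\phi(\cdot,c)$. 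The regularity to be shown is that, along every sequence of such points tending to $(\eta,t_{0})$, the limit of the planes $P$ contains $T_{\eta}\hat W$ and the $t$-direction $e_{t}$.

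The key is to transport this limit to the affine map $(x_{n},f)\colon\bC^{n+1}\to\bC^{2}$ using the homogeneity of $f$. Via the $\bC^{*}$-quotient $x\mapsto([x],f(x)/x_{n}^{\,d})$, a sequence in $\bX\m\bX^{\ity}$ approaching $(\eta,t_{0})$ lifts to $x_{k}\in\bC^{n+1}\m f^{-1}(0)$ converging to a point $y_{0}\neq0$ of the line $L$ over $\eta$, and under this lift the fibres of $(x_{n},f)$ correspond exactly to the $\tau$-levels sitting inside the $x_{n}$-fibres. I would then read off $\lim P$ from the inclusion $\lim P\supseteq\lim T_{x_{k}}(x_{n},f)^{-1}\big((x_{n},f)(x_{k})\big)$: provided $\eta\neq p_{i}$ for all $i$, so that $y_{0}\notin\bigcup_{i}L_{i}$, Lemma \ref{l:thom} says $\cW_{H}$ is Thom $\mathrm{a}_{(x_{n},f)}$-regular at $y_{0}$, whence this limit contains $T_{y_{0}}W$ for the stratum $W=\mathrm{cone}(\hat W)$ and therefore projects onto $T_{\eta}\hat W$. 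The remaining direction $e_{t}$ is furnished by the homogeneity rescaling $1/c^{d}$ in the graph differential, which, combined with the fact that the full pair $(x_{n},f)$ (and not merely $x_{n}$) is controlled, forces the $t$-component of the extra generator of $P$ to blow up and $e_{t}$ to enter $\lim P$. Combining the two inclusions gives Thom $\mathrm{a}_{x_{n}}$-regularity of $\hat W\times\bC$ at every $(\eta,t_{0})$ with $\eta\neq p_{i}$.

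It remains to handle the exceptional lines $\{p_{i}\}\times\bC$, where Lemma \ref{l:thom} is silent. By the previous step the Thom $\mathrm{a}_{x_{n}}$ non-regularity locus of $\bP\cW_{H}\times\bC$ is a closed algebraic subset of $\bX^{\ity}$ contained in $\bigcup_{i}\{p_{i}\}\times\bC$, hence of dimension at most one; it cannot fill a whole line $\{p_{i}\}\times\bC$, because for all but finitely many $t$ the fibre $\bX_{t}$ is typical at infinity over $p_{i}$, so the set of good $t$ is a nonempty Zariski open, hence cofinite, subset of $\bC$. Thus the bad locus meets each line in finitely many points, proving the first assertion. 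The $t$-regularity statement is then immediate: at the good points $\bP\cW_{H}\times\bC$ is a partial Thom stratification at infinity, and since $\tau$ is the projection onto the second factor and each stratum has the product form $\hat W\times\bC$, the restriction $\tau|_{\hat W\times\bC}$ is a submersion, so $\tau$ is transversal to the stratification.

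The main obstacle is precisely the tangent-space translation of the second paragraph: controlling $\lim P$ for the $x_{n}$-fibres of $\bX$ in terms of the affine data $(x_{n},f)$. Two effects must be governed at once — the relevant directions emanate from a singular stratum, where all partials of $\phi$ vanish and the naive limit is indeterminate, so that one genuinely needs the Thom $\mathrm{a}_{(x_{n},f)}$-regularity of Lemma \ref{l:thom} rather than a bare computation; and the capture of the transverse $t$-direction through the $1/c^{d}$ rescaling dictated by the degree-$d$ homogeneity, which encodes the true ``at infinity'' content and is the part most likely to require the delicate simultaneous use of both components of the map $(x_{n},f)$.
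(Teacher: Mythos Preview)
Your strategy away from the points $p_i$ is the paper's: both reduce to the Thom $\mathrm{a}_{(x_n,f)}$-regularity of Lemma~\ref{l:thom}. The paper's execution of what you flag as the ``main obstacle'' is more concrete than your lift-and-project outline. Working in the chart $\{x_0=1\}$, the paper writes the normal to the slice $\bX\cap\{x_n=s\}$ inside $\bC^{n-1}\times\bC_t$ explicitly as
\[
n(y,s)=\Bigl(\tfrac{\partial f}{\partial x_1}(y),\ldots,\tfrac{\partial f}{\partial x_{n-1}}(y),\,-s^{d}\Bigr).
\]
Lemma~\ref{l:thom}, combined with the transversality of $\{x_0=1\}$ to $W$ and to the $(x_n,f)$-fibres, shows that the first block $n'(y)$ does not vanish along the sequence and has a well-defined limit direction $\vec n$ annihilating $T_{\hat x}\hat W$. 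The paper then reads off that the limit normal is $(\vec n,0)$, which annihilates $T_{\hat x}\hat W\times\bC$; so the $e_t$-direction and the $\hat W$-directions are dispatched by a single normal-vector computation rather than a separate blow-up argument. Your instinct that both components of $(x_n,f)$ must be used simultaneously is correct, and this is exactly where that happens.

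Where your proposal has a genuine gap is the treatment of the lines $\{p_i\}\times\bC$. You assert that the non-regularity locus ``cannot fill a whole line because for all but finitely many $t$ the fibre $\bX_t$ is typical at infinity over $p_i$'', but ``typical at infinity'' is undefined here and the nonemptiness of the good Zariski-open set of $t$'s is precisely what must be proved; invoking a generic-fibre or fibration-type statement at this point is circular. The paper supplies the missing input differently: it refines $\bP\cW_{H}\times\bC$ to a genuine Whitney stratification of $\bX$ with $\bX^{\ity}$ a union of strata (always possible, with finitely many strata by semi-algebraicity), and applies Theorem~\ref{t:**} to conclude that this refinement is Thom $(\mathrm{a}_{x_n})$-regular at infinity. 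Each line $\{p_i\}\times\bC$ then meets only finitely many strata of the refinement; on the one-dimensional pieces the tangent direction coincides with that of the line itself, so Thom regularity passes from the refinement back to the original product stratification there, leaving only the finitely many zero-dimensional strata as possible failures.
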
 
 \begin{proof} 
  Let  $(\hat x, t_{0}) \in \bX^{\ity}$, such that $\hat x \in \cH\cap V \m \{ p_{i}\}_{i=0}^{r}$.  Let $\hat W \in\bP \cW_{H}$ be the stratum containing $\hat x$, where $\hat W := W/\bC^{*}$ denotes the projective image of a stratum $W\in \cW^{*}_{H} := \cW \cap H$.  The stratification $\bP \cW_{H}$ can have 0-dimensional strata other than\footnote{Then these are by definition points on some 1-dimensional strata of $\bP \cW$ to which $\cH$ is transversal.} the points $p_{i}$, but in any case we have $\dim W \ge 1$.


We recall that $H := \{x_{n}=0\}$. We may and shall assume, possibly after a linear change of coordinates, that $\hat x = [1; 0; \ldots ; 0]$.
 
In the   affine chart $\bC^{n}\times \bC$ defined by $x_{0}\not= 0$, the hyperplane slice $\bX \cap  \{ x_{n}= s\}$
is of pure dimension $n-1$ and is defined as the intersection of two smooth spaces:
\begin{equation}\label{eq:fibretot}
 \{ f(1, x_{1}, \ldots , x_{n-1}, x_{n})  - tx_{n}^{d} =0 \} \cap \{x_{n} = s \}.
 \end{equation}
 
 We claim that the above intersection \eqref{eq:fibretot}  is transversal and that, if its
 tangent space along a sequence of points $(y,t)\to (\hat x, t_{0})\in \hat W\times \{t_{0}\} \subset \bX^{\ity}$
 converges to a limit $T$  (which depends on the chosen sequence of points), then $T\supset T_{\hat x}\hat W \times \bC$.

\medskip   
The tangent space at some regular point $(y,t)$ of the hypersurface defined by \eqref{eq:fibretot} 
in  the hyperplane $\{y_{n} = s\}$  has as normal vector:
\[n(y,s) := \left( \frac{\partial f}{\partial x_{1}}(y), \ldots, \frac{\partial f}{\partial x_{n-1}}(y) , - s^{d}\right) \in  \{y_{n}=s\}\times \bC =\bC^{n-1}\times \bC .
\]

\medskip

The first part $n'(y) := (\frac{\partial f}{\partial x_{1}}, \ldots, \frac{\partial f}{\partial x_{n-1}})(y)$ of the vector $n(y)$ represents the normal direction at $y$ to the slice by the hyperplane $\{x_{0}=1\}$ of the corresponding fibre $(x_{n}, f)^{-1}(s,ts^{d})$ of the map $(x_{n}, f)$ considered in Lemma \ref{l:thom}. Then Lemma \ref{l:thom} shows that, whenever the following limit of tangent spaces exists:
\[ \lim_{(y,t)\to ((0,\ldots ,0),t_{0})}T_{(y,t)}(x_{n}, f)^{-1}(s,ts^{d}),\] 
it must contain the tangent space $T_{(1, 0, \ldots, 0)}W$, which is of positive dimension. Since the slice  $\{x_{0}=1\}$ is transversal to the stratum $W$, it is also transversal to the fibre $(x_{n}, f)^{-1}(s,ts^{d})$ at the point $y$, for all  $(y,t)\to ((0,\ldots ,0),t_{0})$.

This transversality shows that the derivatives  $\frac{\partial f}{\partial x_{1}}, \ldots, \frac{\partial f}{\partial x_{n-1}}$ are non-identically zero along the chosen sequence of points $(y,t)$, and that the following limit direction is well-defined:
\begin{equation}\label{eq:limitvect}
\overrightarrow{n} =\lim_{y\to 0} \frac{\left( \frac{\partial f}{\partial x_{1}}; \cdots; \frac{\partial f}{\partial x_{n-1}}\right)(y)}{ \left\|   \left( \frac{\partial f}{\partial x_{1}}; \cdots; \frac{\partial f}{\partial x_{n-1}}\right)(y) \right\|}
\end{equation}
 and is normal to the slice $T_{(1, 0, \ldots, 0)}W \cap \{x_{0}=1\}$, since this is a transversal slice of the tangent space $T_{(1, 0, \ldots, 0)}W$ and has positive dimension. Note that in case $\dim W=1$ the slice is one point only (thus we do not have a normal vector anymore). In the general case $\dim W\ge1$ we have shown that the limit direction $\overrightarrow{n}$ exists. This is the only thing that we need, since now, 
 by identifying the general slice $W \cap \{x_{0}=1\}$ of the homogeneous stratum $W$ with its projectivization $\hat W$, this proves that the vector $(\overrightarrow{n}, 0)$ is normal to the stratum $T_{[1; 0; \ldots; 0]}\hat W \times \bC$, which finishes the proof of our claim.

\medskip
To complete the proof of the first assertion of our theorem, it only remains to check the Thom (a$_{x_{n}}$)-regularity along  the  lines $\{ p_{i}\} \times \bC \subset \bX^{\ity}$, $i=0, \ldots , r$.  We claim 
that all the points of the lines $\{ p_{i}\} \times \bC \subset \bX^{\ity}$, $i=0, \ldots , r$ are $t$-regular except of finitely many.   

Indeed, our stratification  $\bP \cW_{H} \times \bC$ may be refined to a Whitney regular stratification of $\bX$ such that $\Sing(\bX)$ is a union of strata. Since it is semi-algebraic, this stratification has finitely many strata.  By Theorem \ref{t:**}, this is also a Thom (a$_{x_{n}}$)-regular stratification at infinity. Now any line $\{ p_{i}\} \times \bC$ may intersect only finitely many strata of this Thom-Whitney stratification, thus our claim is proved.

The second assertion of the theorem is a consequence of the first, since the projection $\tau : \bX \to \bC$ is transversal to the
strata of dimension $\ge 1$ of $\bP \cW_{H} \times \bC$. The only non-transversality points are therefore the 0-dimensional strata of $\bP \cW_{H} \times \bC$, and we have shown that they are included in the lines  $\sqcup_{i=0}^{r}\{ p_{i}\} \times \bC$, and that they are finitely many.
\end{proof}

\begin{remark}\label{r:detectlambda}
  Under the supplementary condition  ``$H$ is admissible'' which will be introduced below, the finitely set of points $\{(p_{i}, t_{i_{j}})\}_{i,j}$ for $t_{i_{j}} \not= 0$  will turn out to be  \emph{isolated $t$-singularities}.
  In this case they can be precisely detected, and we refer to  the criterion \eqref{eq:lambda(p,t)}, as well as to \S\ref{ss:computpolar}.
\end{remark}
\


\section{Admissible hyperplanes, and vanishing cycles outside the central fibre}
  
The above Theorem \ref{t:finiteThom} does not tell anything about the affine singularities of the 
  polynomial $P_{\cH}$. This can have isolated or nonisolated singularities in a finite number of fibres.  We outline  
  a  class of   polynomials $P_{\cH}$, which translates into a class of hyperplanes $\cH$, such that  the homological vanishing cycles which live outside the fibre   $P_{\cH}^{-1}(0)$ are concentrated in the top degree.
    
\subsection{Admissible hyperplanes and $\cT_{0}$-type polynomials.}
Using the notations of \S\ref{ss:singinfty}, we introduce a class of polynomials which extends that of $\cT$-type of \cite{ST-gendefo}, see also \cite[Def. 2.2.2]{Ti-book}:

\begin{definition}\label{d:T0type}
We say that the polynomial $P: \bC^{n}\to \bC$ is of \emph{$\cT_{0}$-type}  if the following two conditions are both satisfied:

\noindent
(a)  $P$ has only isolated affine singularities outside the fibre $P^{-1}(0)$, and

\noindent
(b)  $\tau$  has only isolated $t$-singularities\footnote{Cf Definition \ref{d:t-sing}.} at infinity outside $\overline{P^{-1}(0)}$.
\end{definition}

As before, let  $f: \bC^{n+1} \to \bC$ be some non-constant homogeneous function, and $V := \{f=0\}\subset \bP^{n}$ is endowed with a Whitney stratification $\bP \cW$. Let also $\hat l: \bC^{n+1} \to \bC$ be a linear function defining a hyperplane $H\in \bC^{n+1}$ and let $\cH\subset  \bP^{n}$ denote its corresponding projective hyperplane.   
\begin{definition}\label{d:admissible}
We say that the affine hyperplane $H\subset  \bC^{n+1}$ through 0 (or that the projective  hyperplane $\cH\subset  \bP^{n}$)  is \emph{admissible for $f$} if:
\begin{itemize}
\item[(i)] the isolated non-transversality condition \eqref{eq:star} holds, namely: $\cH$ is transversal to all strata of $\bP\cal W$ except at finitely many points.
\item[(ii)] the polar locus $\Gamma (\hat l,f) \subset \bC^{n+1}$ is either of dimension 1, or it is empty. 
 \end{itemize}
A hyperplane $\cH$ which is admissible for $f$  and contains a certain  point $p\in V$ will be called \emph{admissible for $f$ at $p$}. 
\end{definition}

\medskip

The above definition of ``admissible hyperplanes'' takes into account all the singularities of the slice $V \cap \cH$, hence it is a global condition.  
The set of admissible hyperplanes contains by definition the set of  \emph{generic hyperplanes $\cH$ relative to $V$}, namely hyperplanes which are transversal to  all strata of the stratification $\bP \cW$ of $V$, since in this case the non-transversality locus is empty. It also turns out that the polar locus $\Gamma(\hat l, f)$ is 1-dimensional or empty for generic hyperplanes $\cH$, thus condition (ii) is fulfilled; we remind that in this case the multiplicity of $\Gamma(\hat l, f)$ is precisely the polar degree of $V$. 

The condition \eqref{eq:star} says that admissible hyperplanes $\cH$ can be non-generic. We are actually interested in admissible hyperplanes containing as many as possible non-transversality points, because it will turn out in  \S\ref{s:quantisation}  that  every such point contributes with a non-negative term in the sums \eqref{eq:alpha}, and thus we get better bound results for the polar degree. Let us notice that condition (ii) tells that the multiplicity of $\Gamma(\hat l, f)$ can be viewed as a singular polar degree of $V$ relative to the admissible $\cH$.

\medskip

The next remark tells that hyperplanes which are admissible for $f$ at some singular point $p\in V$ are \emph{generic} among all hyperplanes containing $p$.

\begin{remark}\label{c:Zar}
 Let $V := \{f=0\}\subset \bP^{n}$ and let $p\in \Sing V$ such that $V$ is not a cone of apex $p$. 
 Then the set of admissible hyperplanes for $f$ at $p$ contains  a Zariski-open subset of the set $\check \bP_{\{p\}}^{n}\simeq \check \bP^{n-1}$ of hyperplanes passing through $p$. 
 
 Indeed, let $L\subset \bC^{n+1}$ be the affine line through 0 which corresponds to the point $p\in \Sing V$.
 By our Polar Curve Theorem \ref{t:mainpolar1}, there is a  Zariski-open dense set $\Omega_{L}\subset \check \bP^{n-1}$ of hyperplanes $H\subset \bP^{n}$ satisfying  condition (ii) of admissibility.  The condition  \eqref{eq:star} is also a Zariski-open condition in $\check \bP_{\{p\}}^{n}$.  The intersection of these two Zariski-open dense subsets of $\check \bP^{n-1}$ is a solution to our claim.
 See also Corollary \ref{c:coneornotcone}.

Whatsoever, 
 the existence  of admissible hyperplanes with two isolated tangency points  is not insured, in the sense that the set of such hyperplanes is not a Zariski-open subset anymore.
 This can be checked in the lists of $\pol(V) =1$ or $\pol(V) =2$, cf \cite{Huh} and \cite{SST}.
  In case of the plane curve defined by $x_{0}x_{1}x_{2}=0$, one has 3 singular points of type $A_{1}$
  but there is no admissible line in $\bP^{2}$ passing through two of them because such a line is contained in the curve.
     In case of the normal surface $x_{0}x_{1}x_{2} +x_{3}^{3}=0$ with three singular points $A_{2}$
    and $\pol(V) =2$,  any 2-plane passing through two of these points is non-admissible.  
    \end{remark}

By some linear change of coordinates, one may assume that the hyperplane $\cH$ has equation 
$x_{n}=0$. We consider it as the hyperplane at infinity  for the coordinate system on $\bC^{n} = \bP^{n}\m \cH$, and  we identify $\cH$ with $\bP^{n-1}$. 
We then consider, like before, the polynomial:
\[ P_{\cH}: \bC^{n} \to \bC, \mbox{ \ }  P_{\cH}(x_{0}, \ldots , x_{n-1}) := f(x_{0}, \ldots , x_{n-1}, 1).
\]

Let us remark that if our $\cH$ is admissible then $\deg P_{\cH} = \deg f = d$. Indeed,   $\deg P_{\cH}<d$ is equivalent to $x_{n}$ being a factor of $f$, and therefore $V$ contains $\cH$, which contradicts \eqref{eq:star}. 

\medskip

Recalling the Definition \ref{d:T0type} of  $\cT_{0}$-type polynomials, and Definition \ref{d:t-sing} of isolated $t$-singularities at infinity, we now can show:


\begin{corollary}\label{l:isolated}
Let $V := \{f=0\}\subset \bP^{n}$. If  the hyperplane  $\cH = \{ x_{n}=0\}$ is admissible for $f$,  then the polynomial
  $P_{\cH}$ is of $\cT_{0}$-type,  i.e. outside $\overline{P_{\cH}^{-1}(0)}$ it has only isolated  $t$-singularities at infinity and only isolated affine singularities. 
\end{corollary}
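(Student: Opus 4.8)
The plan is to verify separately the two defining conditions of $\cT_{0}$-type (Definition \ref{d:T0type}), obtaining condition (a) from the admissibility condition (ii) and condition (b) from Theorem \ref{t:finiteThom} combined with (a).

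First I would treat the affine singularities, i.e.\ condition (a). Since $\hat l = x_{n}$, the second row of the defining matrix in \eqref{eq:polarcurve} is $(0, \ldots, 0, 1)$, so its rank drops below $2$ precisely on the homogeneous set $S := \{\partial f/\partial x_{0} = \cdots = \partial f/\partial x_{n-1} = 0\}$, whence $\Gamma(\hat l, f) = \overline{S \m \{f=0\}}$. By the chain rule the critical points of $P_{\cH}$ are exactly the $a \in \bC^{n}$ with $(a,1) \in S$, and those lying off $P_{\cH}^{-1}(0)$ satisfy $(a,1) \in S \m \{f=0\} \subseteq \Gamma(\hat l, f)$; hence they correspond to the slice $\Gamma(\hat l, f) \cap \{x_{n}=1\}$. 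Admissibility (ii) forces $\Gamma(\hat l, f)$ to be a curve or empty, and being homogeneous it is a cone, so this $1$-dimensional cone is a finite union of lines through the origin, each meeting the affine hyperplane $\{x_{n}=1\}$ in at most one point. The slice is therefore finite, giving only finitely many --- hence isolated --- critical points of $P_{\cH}$ off $P_{\cH}^{-1}(0)$. A short continuity argument (a critical point off $P_{\cH}^{-1}(0)$ has nonzero value, while any critical points accumulating to it would inherit a nearby, hence nonzero, value and so belong to that same finite set) shows these points are genuinely isolated, establishing (a).

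For condition (b) I would invoke Theorem \ref{t:finiteThom}: admissibility (i) is exactly the isolated non-transversality condition \textbf{(*)}, so the theorem confines the $t$-singularities at infinity of $P_{\cH}$ to finitely many points lying on the lines $\{p_{i}\}_{i=0}^{r} \times \bC \subset \bX^{\ity}$. Being finite, each such point enjoys the isolated non-transversality required by Definition \ref{d:t-sing}; it remains to verify the second clause of that definition --- no affine singularity of $\tau$ in a neighbourhood. It suffices to treat the points $\eta = (p_{i}, t)$ with $t \neq 0$, since those with $t=0$ lie in $\bX_{0} = \overline{P_{\cH}^{-1}(0)}$ and are excluded. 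Here part (a) does the work: the affine critical points of $P_{\cH}$ off $P_{\cH}^{-1}(0)$ are finitely many fixed points of $\bC^{n}$, so a small enough neighbourhood of $\eta$ (a point at infinity) meets none of them, and by continuity of $\tau$ any affine critical point accumulating to $\eta$ would have value near $t \neq 0$, hence lie off $P_{\cH}^{-1}(0)$ and thus in that finite set --- impossible. Therefore each such $\eta$ is an isolated $t$-singularity, which is (b).

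The main obstacle I anticipate lies not in the finiteness counts themselves but in the bookkeeping for (b): Theorem \ref{t:finiteThom} only yields finiteness of the non-transversality locus at infinity, whereas Definition \ref{d:t-sing} of an \emph{isolated} $t$-singularity demands in addition the absence of nearby affine critical points. Bridging this gap is exactly where the two halves of admissibility must act together --- (i) to confine the singularities at infinity and (ii) to confine the affine ones --- and the crux is the continuity argument that separates the nonzero value of a singularity at infinity from the affine critical values, keeping the two families mutually isolated.
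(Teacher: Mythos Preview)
Your proposal is correct and follows essentially the same approach as the paper: the paper likewise identifies the affine critical locus off $P_{\cH}^{-1}(0)$ with the slice $\Gamma(x_{n},f)\cap\{x_{n}=1\}$ and uses admissibility~(ii) to conclude it is finite, then invokes Theorem~\ref{t:finiteThom} via condition~\textbf{(*)} for the finiteness of $t$-singularities at infinity, and finally combines the two to obtain isolatedness in the sense of Definition~\ref{d:t-sing}. Your treatment is somewhat more explicit about the ``second clause'' bookkeeping, which the paper dispatches in a single sentence, but the substance is the same.
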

\begin{proof}
Since $\cH$ is admissible and thus verifies condition \eqref{eq:star}, Theorem \ref{t:finiteThom} applies and shows that the polynomial $P_{\cH}$,   outside the fibre $\overline{P_{\cH}^{-1}(0)}$, has $t$-singularities at infinity at only finitely many points  on $\bX^{\ity} =(V\cap \cH)\times \bC$, in the notations at the beginning of \S\ref{ss:singinfty}.

The affine singularities of $P_{\cH}$ outside the fibre $P_{\cH}^{-1}(0)$ are precisely the set $\Gamma(x_{n}, f) \cap \{x_{n}=1\}$.   By the admissibility condition (ii) of Definition \ref{d:admissible}, the polar locus $\Gamma(x_{n}, f)\subset \bC^{n+1}$ is a homogeneous curve (thus a collection of finitely many lines through $0\in \bC^{n+1}$) or it is empty,
   and therefore its intersection with $\{x_{n}=1\}$ is of dimension $\le 0$. This proves the property (a) of Definition \ref{d:T0type}.  
   
    Now, since the affine singularities of the polynomial $P_{\cH}$ outside $P_{\cH}^{-1}(0)$ are isolated, it follows that the $t$-singularities at infinity  of  $P_{\cH}$ outside $V=\overline{P_{\cH}^{-1}(0)}$ are also isolated in the sense of Definition \ref{d:t-sing}.
\end{proof}


\subsection{The polar intersection multiplicities at isolated $t$-singularities at infinity.}\label{ss:jumpstinfinity} \
 
 If $P$ is a polynomial of $\cT_{0}$-type then,  outside $\bX_{0} = V$, the map $\tau$ has only isolated singularities of two types: affine singular points, and $t$-singularities at infinity, both sets being finite.  Let us denote by $\Sing^\ity P$ the set of $t$-singularities of $P$ at infinity. The image $\cB_P := \tau(\Sing^\ity P \cup \Sing P) \subset \bC$ is a finite set by our assumption. It is actually finite for any polynomial $P$, without any assumption, see \cite{Ti-compo, Ti-book}.
 
 Let $\Gamma_{(p,t)} (x_{n}, \tau)$ be the polar curve of the map germ $ (x_{n}, \tau) : (\bX, (p,t)) \to \bC \times \bC$ in some affine chart $U\subset \bP^{n}$ at the point $p$.  
   It has been proved in \cite[Theorem 2.1.7]{Ti-book}
 that in case the $t$-singularities at infinity are isolated, we have the equivalence:
 
\begin{equation}\label{eq:lambda(p,t)}
  \Gamma_{(p,t)} (x_{n}, \tau) \not= \emptyset \Longleftrightarrow  (p,t)   \mbox{ is an isolated $t$-singularity at infinity of }P.
\end{equation}

  In our setting of $\cT_{0}$-type polynomials, we apply this to $t\not=0$.

\begin{definition}\cite[Corollary 3.3.1]{Ti-book}\label{d:lambdainfty}
  Let $(p,t)\in \Sing^\ity P$  be an isolated $t$-singularity at infinity. The local intersection multiplicity  $\int_{(p,t)}(\Gamma (x_{n}, \tau), \bX_{t})$ does not depend on the choice of the chart $U$. We then call
 \begin{equation}\label{eq:mu-int}
   \lambda(p,t)= \int_{(p,t)}(\Gamma (x_{n}, \tau), \bX_{t})
\end{equation}
the polar intersection multiplicity at $(p,t)$.
\end{definition}
\sloppy
\begin{remark}\label{r:milnor-lenumber}
We shall see from the proof of the next theorem that $\lambda(p,t)$ represents the number of  vanishing cycles of the general fibre of the polynomial map $P$ which disappear at the point $(p,t)$ when this fibre converges to the fibre ${P}^{-1}(t)$. In the particular case 
 when our $\partial$-Thom stratification at infinity is a  Whitney stratification and $(p,t)$ is an isolated singularity of the function $\tau : \bX \to \bC$, then it was shown in  \cite[Proposition 3.3.5]{Ti-book} that $\lambda(p,t)$ equals the Milnor-L\^{e} number\footnote{The  ``Milnor-L\^{e} fibration'' and its ``Milnor-L\^{e} number'' have been introduced by L\^{e} D.T. in the setting of holomorphic functions  with isolated singularity on a Whitney stratified hypersurface germ, cf \cite{Le}, extending Milnor's fibration on a smooth space. It was used in \cite{ST-singinf} (and more other papers) as a measure of vanishing cycles at infinity.} of the function $\tau$ at $(p,t)$.    
\end{remark}

It  thus follows from \eqref{eq:lambda(p,t)} and \eqref{eq:mu-int} that:

\begin{equation}\label{eq:lambda(p,t)2}
  (p,t)   \mbox{ is an isolated $t$-singularity at infinity of } P  \Longleftrightarrow  \lambda(p,t) >0.
\end{equation}
 
\bigskip

With all the above notations and preparations,  recalling in particular $\cB_P := \tau(\Sing^\ity P  \cup \Sing P) \subset \bC$,  the main result of this section is: 

\begin{theorem} \label{p:F-concentration}
Let $P : \bC^n \to \bC$, $n\ge 2$,  be a non-constant $\cT_{0}$-type polynomial.

Then, for  any disk $D_{0}$ such that $\cB_P  \cap D_{0} = \{ 0\}$,   the relative $\bZ$-homology of the tube $P^{-1}(D_{0})$ is concentrated in dimension $n-1$.
Its  top Betti number  $b_{n-1}(P^{-1}(D_{0}))$  is equal to the sum of all Milnor numbers of the isolated affine singularities outside the fibre $P^{-1}(0)$, together with the sum of the polar multiplicities $\lambda(p,t)$ at the isolated $t$-singularities at infinity outside the fibre $\bX_{0}$. 
 \end{theorem}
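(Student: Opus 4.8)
The plan is to read the reduced homology of the tube $P^{-1}(D_{0})$ off the \emph{complement} of the tube inside the contractible space $\bC^{n}$, since this is precisely where the singularities away from the value $0$ live. Concretely, I would first exploit the contractibility of $\bC^{n}$: the long exact sequence of the pair $(\bC^{n}, P^{-1}(D_{0}))$ gives isomorphisms $\tilde H_{k}(P^{-1}(D_{0})) \cong H_{k+1}(\bC^{n}, P^{-1}(D_{0}))$ for all $k$ (so ``relative'' and ``reduced'' carry the same information, up to a degree shift). Writing $\bC^{n}$ as the union of the closed tube $P^{-1}(D_{0})$ and $P^{-1}(\bC\m D_{0}^{\circ})$ glued along $P^{-1}(\partial D_{0})$, excision yields $H_{k+1}(\bC^{n}, P^{-1}(D_{0})) \cong H_{k+1}(P^{-1}(\bC\m D_{0}^{\circ}), P^{-1}(\partial D_{0}))$. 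Since $\cB_{P}$ is finite, $P$ is a locally trivial fibration over $\{|t|>R\}$ for $R\gg 0$ with $\cB_{P}\subset D_{R}$; trivialising that fibration at infinity lets me retract onto the compact annulus $A := \overline{D_{R}\m D_{0}}$, so that $H_{k+1}(\bC^{n},P^{-1}(D_{0})) \cong H_{k+1}(P^{-1}(A), P^{-1}(\partial D_{0}))$. The annulus $A$ contains exactly the atypical values $\cB_{P}\m \{0\} = \{c_{1},\ldots,c_{m}\}$, i.e. precisely the values carrying the affine singularities outside $P^{-1}(0)$ and the $t$-singularities outside $\bX_{0}$.

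Next I would localise this relative homology at the $c_{i}$. Choosing small disjoint disks $\delta_{i}\ni c_{i}$ and a system of arcs joining $\partial D_{0}$ to the $\partial \delta_{i}$, the fibration over $A$ away from the $c_{i}$ is trivial, so $A$ retracts rel $\partial D_{0}$ onto $\partial D_{0}$ together with the arcs and the disks $\delta_{i}$; the arcs carry products $F\times[0,1]$ (with $F$ the general fibre), which are relatively trivial rel one end. A Mayer--Vietoris / excision argument then gives $H_{*}(P^{-1}(A), P^{-1}(\partial D_{0})) \cong \bigoplus_{i=1}^{m} H_{*}(P^{-1}(\delta_{i}), P^{-1}(c_{i}'))$, a direct sum of \emph{local vanishing homologies} at the isolated atypical values $c_{i}\neq 0$.

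At each such $c_{i}$, Corollary \ref{l:isolated} guarantees that $P$ has, in $\tau^{-1}(c_{i})$, only finitely many isolated affine singularities and isolated $t$-singularities at infinity. The affine ones contribute, via the classical Milnor fibration (each local Milnor fibre being a bouquet of $(n-1)$-spheres), the group $\bZ^{\mu_{p}}$ concentrated in degree $n$; the ones at infinity contribute, by the theory of vanishing cycles at isolated $t$-singularities \cite{ST-singinf}, \cite{Ti-book} together with \eqref{eq:lambda(p,t)2} and Definition \ref{d:lambdainfty}, the group $\bZ^{\lambda(p,c_{i})}$ concentrated in degree $n$. As these points are distinct, the local vanishing homology at $c_{i}$ is concentrated in degree $n$ with rank $\sum_{p}\mu_{p} + \sum_{p}\lambda(p,c_{i})$.

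Assembling, $H_{*}(P^{-1}(A),P^{-1}(\partial D_{0}))$ is concentrated in degree $n$ and its rank is the total, over all $c_{i}\neq 0$, of the affine Milnor numbers outside $P^{-1}(0)$ plus the $\lambda(p,t)$ at the isolated $t$-singularities outside $\bX_{0}$. Transporting back through the isomorphisms of the first paragraph, $\tilde H_{*}(P^{-1}(D_{0}))$ is then concentrated in degree $n-1$ with the asserted top Betti number $b_{n-1}(P^{-1}(D_{0}))$. The main obstacle is the local analysis at infinity: proving that an isolated $t$-singularity $(p,c_{i})$ carries vanishing homology concentrated in the single degree $n$ of rank exactly $\lambda(p,c_{i})$, and that the affine and infinite contributions at a shared value $c_{i}$ split as a direct sum. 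This is where the $\partial$-Thom stratification at infinity and the Milnor--L\^e fibration of $\tau$ on the stratified space $\bX$ (Theorem \ref{t:finiteThom} and \cite{Ti-book}) are indispensable, the Whitney-free partial Thom setting being just strong enough to localise and count these cycles.
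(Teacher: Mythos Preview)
Your proposal is correct and follows essentially the same route as the paper: pass to the relative pair $(\bC^{n}, P^{-1}(D_{0}))$, decompose by excision over the locally trivial fibration into the local vanishing homologies $H_{*}(F_{D_{b}}, F_{t_{b}})$ at the atypical values $b\neq 0$, and invoke \cite{Ti-book} for the concentration in degree $n$ and the identification of the local contributions as Milnor numbers and polar multiplicities $\lambda(p,t)$. One small correction: your appeals to Corollary~\ref{l:isolated} and Theorem~\ref{t:finiteThom} are misplaced, since those results concern the particular polynomial $P_{\cH}$ attached to an admissible hyperplane; here you should simply use the standing $\cT_{0}$-type hypothesis directly together with the general theory from \cite{Ti-book}.
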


\begin{proof}
By \cite[Corollaries 1.2.13, 1.2.14]{Ti-book}, the set  $\cB_P$ includes the set of atypical values $\Atyp P$, which means that
 one has a locally trivial topological fibration:
\begin{equation}\label{eq:toptriv1}
P_{|} : \bC^{n}\m P^{-1}(\cB_P) \to \bC\m \cB_P.
\end{equation}

Therefore, if $b\not\in \cB_P$, then the fibre $F_b := P^{-1}(b)$
is nonsingular and has no $t$-singularities at infinity. Let $D_b$ denote a closed disk centred at $b\in \cB_P$, small enough   such that $D_b\cap\cB_P = \{b\}$.   Let  $t_b \in \partial D_b$ denote a fixed point on the boundary of the disk.  We employ the notation $F_{K}:= P^{-1}(K)$, for any $K\subset \bC$.
By using  excision in the locally trivial fibration  \eqref{eq:toptriv1} we get the homology decomposition:
\begin{equation}\label{eq:direct_sum}
H_*(\bC^n , F_{D_{0}}) \simeq \oplus_{b\in \cB_P\m \{0\}} H_*(F_{D_b}, F_{t_b}).
\end{equation} 
 By Step 1 of the proof of \cite[Bouquet Theorem 3.2.1]{Ti-book}\footnote{A similar bouquet statement was shown  by Parusi\' nski  \cite[Theorem 2.1]{Pa}, with a different proof.}
under the hypothesis ``the fibre $F_{b}$ has isolated affine singularities and isolated $t$-singularities''
the relative homology $H_*(F_{D_b}, F_{t_b})$ is concentrated in dimension $n$ essentially because it is localizable at the isolated singularities of the fibre $\bX_{b}$. 
More precisely, denoting by $B_{\e}(q)$ some small enough ball at $q$, the relative homology  $H_*(F_{D_b}, F_{t_b})$ is
  the direct sum of the relative homology groups $H_*(B_{\e}(q)\cap F_{D_b}, B_{\e}(q)\cap F_{t_b})$ for all singular 
  points $q \in F_{b}$ and all $t$-singularities $q  \in \bX_{b}\cap H^{\ity}$, as done in the proof of Step 1 of \cite[Theorem 3.2.1]{Ti-book}, notably by \cite[\S 3.2, (3.16), (3.17), (3.21)]{Ti-book}. This shows that the relative homology $H_*(B_{\e}(q)\cap F_{D_b}, B_{\e}(q)\cap F_{t_b})$ is concentrated  in dimension $n$.
     In \emph{loc.cit.} the relative $n$-cycles are called the ``vanishing cycles at infinity''  at the point $(q, b)$ whenever $q  \in \bX_{b}\cap \cH$, and it is shown in \cite[\S 3.3, (3.22), (3.23)]{Ti-book} that the number of vanishing cycles is  equal to the polar intersection multiplicity $\lambda(p,t)$ defined at  \eqref{eq:mu-int}. 

It then follows  from \eqref{eq:direct_sum} that 
the relative homology $H_*(\bC^n , F_{D_{0}})$ is concentrated in dimension $n$, and so the reduced homology $\tilde H_*( F_{D_{0}})$ is concentrated in dimension $n-1$.
\end{proof}

\begin{remark}\label{r:euler}
The above result can be regarded as a far reaching global version of a local statement  known under the name of ``special fibre theorem'', proved in  \cite{Si-specialfibre}.

 In contrast to the local setting, the tube $P^{-1}(D_{0})$ is not necessarily contractible to its central fibre $V \m H^{\ity}$, and the equality $b_{n-1}(F_0) = b_{n-1}(F_{D_0})$  is also not true in general. This can be seen  in the simple example $f(x,y) = x+ x^{2}y$.  
 Nevertheless the Euler characteristics of the tube $P^{-1}(D_{0})$ and of its central fibre $V \m H^{\ity}$ are the same.
 Indeed we have $\chi(P^{-1}(D_{0})) = \chi (P^{-1}(D_{0}) \m V) + \chi (V \m H^{\ity})$. Then $P^{-1}(D_{0}) \m V$ retracts to $P^{-1}(\partial D_{0})$  due to the condition $\cB_{P}\cap D_{0} = \emptyset$, which is the total space of a locally trivial over a circle and hence its Euler characteristic is 0.
\end{remark}


The above proof may be extended at the homotopy type level. We then obtain the following result, which will not be used here but we state it for the record. This extends  \cite[Theorem 3.2.1]{Ti-book}, and in particular the bouquet theorem of \cite{ST-singinf}:

\begin{theorem}
If $P : \bC^n \to \bC$, $n\ge 2$ is a non-constant $\cT_{0}$-type polynomial, then the tube $F_{D_{0}}$ is homotopy equivalent to a bouquet of $n-1$ spheres, for  any disk $D_{0}$ such that $\cB_P  \cap D_{0} = \{ 0\}$.
The number of these spheres is equal to the number $\beta(V,\cH)$ defined  at \eqref{eq:betanumber}.
\fin
\end{theorem}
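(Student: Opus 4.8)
The plan is to upgrade the homology-level concentration result of Theorem \ref{p:F-concentration} to a homotopy-type statement by reworking the same localisation argument, but at the level of homotopy types rather than just homology groups. The target is to show that the tube $F_{D_{0}} = P^{-1}(D_{0})$ is homotopy equivalent to a wedge of $(n-1)$-spheres whose count equals $\beta(V,\cH)$.

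First I would invoke the locally trivial fibration \eqref{eq:toptriv1} exactly as in the homology proof: over $\bC\m\cB_{P}$ the map $P$ is a fibre bundle, so that the pair $(\bC^{n}, F_{D_{0}})$ can be analysed by excising away from the atypical values different from $0$. The key mechanism is that $F_{D_{0}}$ is obtained from a regular fibre $F_{t_{0}}$ (for $t_{0}$ near $0$ but off $\cB_{P}$) by attaching cells coming from the vanishing cycles collapsing as $t_{0}\to 0$. The heart of the matter is the ``Bouquet Theorem 3.2.1'' of \cite{Ti-book}, which is proved for a small tube around a single atypical fibre of a polynomial having isolated affine singularities and isolated $t$-singularities at infinity; by Corollary \ref{l:isolated}, our $\cT_{0}$-type hypothesis guarantees precisely these localisation properties near the fibre over $0$. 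So rather than reprove anything, I would apply that Bouquet Theorem directly to the tube $F_{D_{0}}$ over the disk $D_{0}$ with $\cB_{P}\cap D_{0}=\{0\}$.

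The second step is to identify the number of spheres. Since the homotopy equivalence realises $F_{D_{0}}$ as a wedge of $(n-1)$-spheres, the number of spheres equals the reduced Betti number $b_{n-1}(F_{D_{0}})$, which by Theorem \ref{p:F-concentration} is the sum of the Milnor numbers of the isolated affine singularities outside $P^{-1}(0)$ together with the polar multiplicities $\lambda(p,t)$ at the isolated $t$-singularities at infinity outside $\bX_{0}$. This sum is by definition the quantity $\beta(V,\cH)$ of \eqref{eq:betanumber}, which closes the count. The passage from homology to a wedge is not automatic in general, but here it is legitimate because the localised vanishing-cycle pieces are each bouquets of spheres (local Milnor-type data), and simple-connectivity in the relevant range lets one upgrade homology equivalence to homotopy equivalence by a standard Whitehead argument, exactly as carried out in the cited Bouquet Theorem.

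The main obstacle will be checking that the hypotheses of the Bouquet Theorem of \cite{Ti-book} apply verbatim to a tube around the fibre over $0$, rather than to a tube around a generic atypical fibre of the sort treated there. Concretely one must verify that the localisation at the finitely many affine singular points and isolated $t$-singularities in the central fibre still produces the required cell-attachment structure, and that the connectivity conditions needed to pass from homology to homotopy (the suspension/Whitehead step) hold for $n\ge 2$. Granting that the proof of \cite[Theorem 3.2.1]{Ti-book} is, as the authors assert, applicable at the homotopy-type level with $F_{D_{0}}$ in place of $F_{b}$, the remaining work is purely bookkeeping to match the sphere count with $\beta(V,\cH)$.
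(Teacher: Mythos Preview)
There is a directional error in your localisation argument that makes the proposal break down. You start correctly by considering the pair $(\bC^{n}, F_{D_{0}})$ and excising to the atypical values in $\cB_{P}\setminus\{0\}$, but you then reverse direction and claim that ``$F_{D_{0}}$ is obtained from a regular fibre $F_{t_{0}}$ by attaching cells coming from the vanishing cycles collapsing as $t_{0}\to 0$,'' and later that one must ``verify that the localisation at the finitely many affine singular points and isolated $t$-singularities \emph{in the central fibre} still produces the required cell-attachment structure.'' This is precisely what the $\cT_{0}$-type hypothesis does \emph{not} give you: by Definition~\ref{d:T0type} the singularities of $P$ are isolated only \emph{outside} $P^{-1}(0)$, while the fibre over $0$ (and its closure $\bX_{0}=V$) may have arbitrary non-isolated singularities. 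So there is no localisation available at the central fibre, and you cannot conclude anything about the homotopy type of the pair $(F_{D_{0}}, F_{t_{0}})$. Even if you could, you would still need to know that the general fibre $F_{t_{0}}$ is itself a bouquet, which again requires the full $\cT$-type hypothesis rather than the weaker $\cT_{0}$-type.

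The paper's argument goes the other way, and this is the only way that works under the $\cT_{0}$ assumption. One reruns the proof of the Bouquet Theorem \cite[Theorem 3.2.1]{Ti-book} with $F_{D_{0}}$ playing the role that the general fibre $F_{b}$ plays there: one builds $\bC^{n}$ from $F_{D_{0}}$ by attaching $n$-cells corresponding to the isolated affine singularities and isolated $t$-singularities at infinity over each $b\in\cB_{P}\setminus\{0\}$, exactly the decomposition \eqref{eq:direct_sum} upgraded to the homotopy level. These singularities \emph{are} isolated by the $\cT_{0}$ hypothesis. Since $\bC^{n}$ is contractible and only $n$-cells are being attached, $F_{D_{0}}$ is $(n-2)$-connected with reduced homology concentrated in degree $n-1$, hence a bouquet of $(n-1)$-spheres. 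Your second step, identifying the number of spheres with $b_{n-1}(F_{D_{0}})=\beta(V,\cH)$ via Theorem~\ref{p:F-concentration}, is then correct as stated.
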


\medskip
\section{Quantization of the polar degree}\label{s:quantisation}

In this section we describe the quantifiers of the polar degree $\pol(V)$ and we prove the main decomposition result for $\pol(V)$.  Firstly, let us recall a very well known result due to L\^{e} D.T. \cite{Le} that a function with a stratified isolated singularity on a singular hypersurface of dimension $n-1$ has a local Milnor fibre (also called Milnor-L\^{e} fibre) the homotopy type of which is a bouquet of $(n-2)$-dimensional spheres, where $n\ge 3$, and that the number of these spheres is called Milnor-L\^{e} number.

\subsection{The local Milnor-L\^e number $\alpha_p(V, \cH)$.}\label{ss:sectionalM} \

In some affine chart  $\bC^{n}\subset \bP^{n}$ containing $p\in V$, let us consider a  linear function  $l: \bC^n \to \bC$  such that $l(p) =0$, and let $H_s := \{l=s\}$ for $s\in \bC$, where $H_{0} := H$.  In particular,  the projective closure of $H$ is a hyperplane  $\cH\in \bP^{n}$ which contains the point $p$.
 We  assume that  $\cH$ is transversal to all the strata of the stratification $\bP \cW$ of $V$ in the neighborhood of $p$, except 
 at the point $p$ itself. This is equivalent to saying that the restriction of  the function $l$ to some small neighborhood $B_{\e}$ of $p$ in $\bP^{n}$  has a stratified isolated singularity at $p$ with respect to $\bP \cW$. Consequently, its local Milnor-L\^e fibre  $B_{\e}\cap (V \cap H_s)$, for some $s$ close enough to $0$, has the homotopy type of a bouquet of spheres of dimension $n-2$, cf L\^{e}'s  results in \cite{Le}. 
We denote its Milnor-L\^e number by $\alpha_p(V, \cH)$, and we remark that if $\cH_{\gen}$ is a general hyperplane through $p$, then  $\alpha_p(V,\cH_{\gen})$ is the Milnor number of the \emph{complex link} of  $V$ at $p$, and thus we shall denote 
 it simply by $\alpha_p(V)$.

\begin{remark}\label{r:reducedstructure}
By its definition,  the integer $\alpha_p(V, \cH)$ is non-negative, and depends only on the \emph{reduced} structure of $V$ at $p$, and on the chosen hyperplane $\cH$. We  send to Proposition \ref{p:specialpoints} for the proof that  $\alpha_p(V) \not= 0$ only for finitely many points $p\in V$.

Let $f_{p} =0$  be a local equation of the reduced hypersurface germ $(V,p)$. Then $\alpha_p(V,\cH)$
equals the \emph{polar multiplicity of $f_{p}$ with respect to $l$ at $p$}:
\begin{equation}\label{eq:polarMilnor}
 \alpha_p(V,\cH) =  \int_{p}(H_{0}, \Gamma (l,f_{p})), 
\end{equation}
\sloppy
This polar intersection multiplicity might be  higher than the \emph{generic polar number} $\mult_{p}\Gamma (l_{\gen},f_{p})$, which is equal to $\alpha_p(V)$.

We refer to \cite{Ti-tang} for more details and for other results on the Milnor number of the hyperplane slices to  complex analytic space germs.  
\end{remark}

 \subsection{The impact of admissible hyperplanes.}
 Let  now $\cH$ be an admissible hyperplane for $V$ (cf Definition \ref{d:admissible}). We  define:
 
\begin{equation}\label{eq:alphanumber} 
\alpha(V,\cH) := \sum_{p\in V\cap \cH} \alpha_p(V,\cH)
\end{equation}
as  the sum of  the sectional Milnor numbers $\alpha_p(V,\cH)$ that have been defined at \eqref{eq:polarMilnor}. Let us show that $\alpha(V,\cH)$ is a well-defined non-negative integer.
\begin{lemma}
One has  $\alpha_p(V,\cH) > 0$ only if $p$ belongs to the finite subset $\{p_{i}\}_{i=0}^{r}\subset V\cap \cH$ of non-transversality from condition \eqref{eq:star}.   In particular, $\alpha(V,\cH) = \sum_{i=0}^{r} \alpha_{p_{i}}(V,\cH)$.
\end{lemma}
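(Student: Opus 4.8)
The plan is to prove the contrapositive: if $p\in V\cap \cH$ is a point at which $\cH$ is transversal to \emph{all} strata of $\bP\cW$ through $p$ — equivalently $p\notin\{p_i\}_{i=0}^r$ — then $\alpha_p(V,\cH)=0$. The guiding idea is that, by the definition recalled in \S\ref{ss:sectionalM}, the number $\alpha_p(V,\cH)$ is the count of $(n-2)$-spheres in the bouquet to which the local Milnor--L\^e fibre $B_\e\cap(V\cap H_s)$ is homotopy equivalent, i.e. the Milnor--L\^e number of the restriction $l|_V$ at $p$. Being a count of vanishing cycles, it can be positive only when $l|_V$ genuinely fails to be a stratified submersion at $p$, so the whole argument reduces to translating ``transversality'' into ``no vanishing cycles''.

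First I would set up the dictionary between the transversality in condition \textbf{(*)} and stratified criticality of the linear form $l$. With $\{l=0\}=H$ fixed, the projective hyperplane $\cH$ is transversal to a stratum $S\in\bP\cW$ at $p$ exactly when $l|_S$ is a submersion at $p$; conversely $p$ is a stratified critical point of $l|_V$ precisely when $\cH$ is tangent to some stratum through $p$. By condition \textbf{(*)} the locus of such tangency points is the finite set $\{p_i\}_{i=0}^r$. Hence, for $p\notin\{p_i\}$, the form $l$ restricts to a stratified submersion on a small stratified neighbourhood $B_\e\cap V$ of $p$.

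Next I would invoke local triviality: a proper stratified submersion is locally trivial by the Thom--Mather first isotopy lemma, so for $|s|$ small the Milnor--L\^e fibre $B_\e\cap(V\cap H_s)$ is homeomorphic to the central slice $B_\e\cap V\cap\cH$, which is a transversal and hence contractible slice of $V$ at $p$. Consequently this fibre carries no vanishing $(n-2)$-cycles, and $\alpha_p(V,\cH)=0$. An equivalent route runs through the polar formula \eqref{eq:polarMilnor}: transversality at $p$ forces the local polar locus $\Gamma(l,f_p)$ to avoid $p$, so the intersection multiplicity $\int_p(H_0,\Gamma(l,f_p))$ vanishes there. Either way the contribution at a transversality point is zero.

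Having shown that $\alpha_p(V,\cH)>0$ forces $p\in\{p_i\}_{i=0}^r$, the ``in particular'' assertion is immediate: in the defining sum \eqref{eq:alphanumber} only the finitely many terms indexed by the $p_i$ can survive, whence $\alpha(V,\cH)=\sum_{i=0}^r\alpha_{p_i}(V,\cH)$ is a finite, well-defined non-negative integer. I expect the single delicate point to be the clean justification that stratified transversality yields a contractible Milnor--L\^e fibre and therefore no vanishing cycles; here I would lean on L\^e's local fibration theory together with the Thom--Mather isotopy lemma rather than on any computation, the remaining steps being routine bookkeeping.
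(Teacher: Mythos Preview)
Your argument is correct. Your primary route is topological: at a transversality point $p$ the linear form $l$ is a stratified submersion on $B_\e\cap V$, Thom--Mather gives a trivial local fibration, and the Milnor--L\^e fibre is contractible, so $\alpha_p(V,\cH)=0$. The paper instead takes the polar-curve route that you mention only as an ``equivalent route'': it simply observes that stratified transversality $\cH\pitchfork_p V$ forces the germ $\Gamma_p(l,f_p)$ to be empty by its very definition, whence $\int_p(H_0,\Gamma(l,f_p))=0$ via \eqref{eq:polarMilnor}. The paper's version is a one-liner and avoids invoking the isotopy lemma; yours is more self-contained on the topological side and does not rely on the polar identity. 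One small addition in the paper that you leave implicit: it also records that at each non-transversality point $p_i$ the condition \textbf{(*)} forces $\Gamma_{p_i}(l,f_{p_i})\cap B_{p_i}\cap V\subset\{p_i\}$, hence $\dim\Gamma_{p_i}(l,f_{p_i})\le 1$, which is precisely what makes each $\alpha_{p_i}(V,\cH)$ a \emph{finite} number and the sum in \eqref{eq:alphanumber} well defined.
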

\begin{proof}
  Our  claim is equivalent to the following:  the polar locus germ $\Gamma_{p}(l, f_{p})$ is empty 
 if  $p\not\in \{p_{i}\}_{i=0}^{r}$, and it is of dimension $\le 1$ if $p\in \{p_{i}\}_{i=0}^{r}$.   
  
Indeed, if $p$ is a point of stratified transversal  intersection  $\cH \pitchfork_{p} V$, then the polar locus $\Gamma_{p}(l, f_{p})$ is empty as a direct consequence of its definition.  If $p\in \{p_{i}\}_{i=0}^{r}$,  let $B_{p}$ denote a small enough ball centred at $p$.
 By the condition of  isolated non-transversality \eqref{eq:star},  the polar locus $\Gamma_{p}(l, f_{p})$ intersects $B_{p}\cap V$ at most at $p$, thus $\dim \Gamma_{p}(l, f_{p})\le 1$. 
\end{proof}

 The following non-negative integer is also well-defined since each sum is finite, by  Corollary \ref{l:isolated}:
\begin{equation}\label{eq:betanumber}
 \beta(V,\cH)  := \beta^\aff(V,\cH)  + \beta^{\infty}(V,\cH), 
\end{equation}
where  
\[\beta^\aff(V,\cH) :=  \sum_{v\in (\Sing P_{\cH}) \m V} \mu_{v}( P_{\cH})\]
 is the sum of all Milnor numbers of the isolated affine singularities outside the fibre $P_{\cH}^{-1}(0)\subset V$, and 
 \[\beta^{\infty}(V,\cH) := \sum_{t\not= 0,  q\in V\cap \cH} \lambda(q,t) \]
  denotes the sum of the  
 numbers of vanishing cycles at infinity $\lambda(q,t)$ of the isolated $t$-singularities at infinity outside the fibre $\bX_{0} = V$,  cf Definition \ref{d:lambdainfty}. 
 
 \begin{remark}
Under the admissibility condition, the number $\beta(V,\cH)$ depends only on the \emph{reduced} structure of $V$, and on the chosen hyperplane $\cH$. Indeed, by Remark \ref{r:reducedstructure} the integer $\alpha(V,\cH)$ depends only on the reduced structure of $V$, and on the chosen hyperplane $\cH$. We then get the claimed independence of $\beta(V,\cH)$ by the equality
\eqref{eq:alpha} of Theorem \ref{t:polformula}, since we already know that $\pol(V)$ depends on the reduced structure of $V$ only.

According to the formula \eqref{eq:grad} for $\pol(V)$, the number $\beta^\aff(V,\cH)$ may also be viewed as a \emph{singular polar degree}, i.e.  
$$\pol_{\cH}(V) :=  \# (\grad f)^{-1}(\hat l) =  \mult_{0} \Gamma(\hat l, f) ,$$
 for $\hat l \in \check \bP^{n}$ defining the admissible $\cH$.

\end{remark}
  
 With these notations, we may now give the following presentation of $\pol(V)$ as a sum of \emph{local non-negative invariants} for hypersurfaces $V$ with any singular locus, extending Huh's result  for $V$ with isolated singularities  \cite[Theorem 2]{Huh}. 

\begin{theorem} \label{t:polformula}
 Let $V  := \{f=0\} \subset \bP^{n}$ be a projective hypersurface and let  
  $\cH$ be an admissible  hyperplane for $V$.  Then:
\begin{equation}\label{eq:alpha}
\pol(V)  = \alpha (V,\cH)   +  \beta (V,\cH).
\end{equation}      
\end{theorem}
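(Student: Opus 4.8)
The plan is to express each of $\pol(V)$, $\beta(V,\cH)$ and $\alpha(V,\cH)$ through Euler characteristics of the affine complements $V\m\cH_{\gen}$ and $V\m\cH$, and then to read off \eqref{eq:alpha} by a single cancellation. Writing $\chi$ for the Euler characteristic, I would establish the two ``homological'' identities
\[
\chi(V\m\cH_{\gen}) = 1 + (-1)^{n-1}\pol(V), \qquad \chi(V\m\cH) = 1 + (-1)^{n-1}\beta(V,\cH),
\]
together with the ``geometric'' comparison
\[
\chi(V\m\cH_{\gen}) - \chi(V\m\cH) = (-1)^{n-1}\alpha(V,\cH).
\]
Subtracting the second identity from the first and equating with the comparison yields $(-1)^{n-1}\bigl(\pol(V)-\beta(V,\cH)\bigr)=(-1)^{n-1}\alpha(V,\cH)$, and cancelling $(-1)^{n-1}$ gives precisely \eqref{eq:alpha}. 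The first identity is immediate from \eqref{eq:pollink}: since $V\m\cH_{\gen}$ is homeomorphic to $\lk_{0}(\{f=0\})$, it is homotopy equivalent to a bouquet of $\pol(V)$ spheres of dimension $n-1$ by \eqref{eq:link}, whose Euler characteristic is $1+(-1)^{n-1}\pol(V)$.

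For the second identity I would first invoke Corollary \ref{l:isolated} to see that admissibility of $\cH$ makes $P_{\cH}$ a $\cT_{0}$-type polynomial. Theorem \ref{p:F-concentration}, applied to a disk $D_{0}$ with $\cB_{P_{\cH}}\cap D_{0}=\{0\}$, then shows that the reduced homology of the tube $P_{\cH}^{-1}(D_{0})$ is concentrated in dimension $n-1$ with top Betti number $\beta^{\aff}(V,\cH)+\beta^{\infty}(V,\cH)=\beta(V,\cH)$, so that $\chi(P_{\cH}^{-1}(D_{0}))=1+(-1)^{n-1}\beta(V,\cH)$. By Remark \ref{r:euler} the tube and its central fibre $P_{\cH}^{-1}(0)=V\m\cH$ have equal Euler characteristics, which gives the asserted value of $\chi(V\m\cH)$.

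The comparison is the heart of the argument. Using $\chi(V)=\chi(V\m\cH)+\chi(V\cap\cH)$ and the same for $\cH_{\gen}$, it is equivalent to $\chi(V\cap\cH)-\chi(V\cap\cH_{\gen})=(-1)^{n-1}\alpha(V,\cH)$, i.e.\ to localising the change in Euler characteristic of the hyperplane section at the non-transversality points $\{p_{i}\}_{i=0}^{r}$. I would set this up with a generic pencil of hyperplanes $\{\cH_{\lambda}\}_{\lambda\in\bP^{1}}$ with $\cH_{0}=\cH$, chosen so that every member $\cH_{\lambda}$ with $\lambda\neq 0$ small is transversal to $\bP\cW$ and so that the base locus meets $V$ transversally away from the $p_{i}$. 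Away from small balls $B_{i}$ around the $p_{i}$, the Thom regularity supplied by Lemma \ref{l:thom} and Theorem \ref{t:finiteThom} makes the family $V\cap\cH_{\lambda}$ locally trivial near $\lambda=0$, so the Euler characteristics of $(V\cap\cH_{0})\m\bigcup_i B_{i}$ and $(V\cap\cH_{\lambda})\m\bigcup_i B_{i}$ agree; additivity then reduces the global difference to the sum of local differences inside the $B_{i}$. At each $p_{i}$ the central slice $B_{i}\cap V\cap\cH$ is contractible by the local conic structure, while the nearby slice $B_{i}\cap V\cap\cH_{\lambda}$ is the local Milnor--L\^e fibre of $l_{|V}$, a bouquet of $\alpha_{p_{i}}(V,\cH)$ spheres of dimension $n-2$ by L\^e's theorem \cite{Le} (cf.\ \S\ref{ss:sectionalM}). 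Hence the local contribution is $1-\bigl(1+(-1)^{n-2}\alpha_{p_{i}}(V,\cH)\bigr)=(-1)^{n-1}\alpha_{p_{i}}(V,\cH)$, and summing over $i$ and using \eqref{eq:alphanumber} gives the comparison, since $\chi(V\cap\cH_{\lambda})=\chi(V\cap\cH_{\gen})$ for any transversal member.

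The main obstacle is exactly this localisation step: one must guarantee that the contributions away from the finitely many $p_{i}$ cancel, which requires controlling the family $V\cap\cH_{\lambda}$ along both the base locus of the pencil and the hyperplane at infinity. This is where the admissibility hypothesis \textbf{(*)} and the Thom $\mathrm{a}_{(\hat l,f)}$-regularity of Lemma \ref{l:thom}, propagated to infinity by Theorem \ref{t:finiteThom}, are essential; the remaining inputs---contractibility of the central Milnor--L\^e fibre and the bouquet description of the generic one---are standard once the isolated stratified singularity of $l_{|V}$ at each $p_{i}$ is in place.
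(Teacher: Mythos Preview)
Your argument is correct and follows essentially the same route as the paper: reduce \eqref{eq:alpha} to the identity $\chi(V\cap\cH_{\gen})-\chi(V\cap\cH)=(-1)^{n}\alpha(V,\cH)$ via Theorem \ref{p:F-concentration} and Remark \ref{r:euler}, then prove that identity by localising a generic pencil through $\cH$ at the non-transversality points. One small clarification: in the localisation step you invoke Lemma \ref{l:thom} and Theorem \ref{t:finiteThom}, but those concern the Thom regularity of $(x_{n},f)$ and the behaviour of $P_{\cH}$ at infinity, not the projective pencil $\{\cH_{\lambda}\}$ on $V$; the paper obtains the local triviality away from the $p_{i}$ more directly, by choosing the axis $A$ of the pencil transversal to the induced Whitney stratification $\bP\cW_{H}$ of $V\cap\cH$ and using that the only stratified singularities of the pencil off $A$ are the points in \textbf{(*)}---no ``infinity'' issue arises since everything is projective.
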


\

\begin{proof}

By Theorem \ref{p:F-concentration},  which applies here via  Corollary \ref{l:isolated}, the relative homology $H_*(\bC^{n}, P_{\cH}^{-1}(D_{0}) )$ is concentrated in dimension $n$. Moreover, the
top Betti number $b_{n-1}(P_{\cH}^{-1}(D_{0})) = b_{n}(\bC^{n}, P_{\cH}^{-1}(D_{0}) )$ is a sum of certain Milnor numbers and intersections numbers  at infinity,  where $D_0\subset \bC$ is some disk such that $\cB_{P_{\cH}}  \cap D_{0} = \{ 0\}$. 

 This is by definition our number  $\beta (V,\cH)$.
 
 \smallskip

Since the tube $P_{\cH}^{-1}(D_{0})$ and the fibre $P_{\cH}^{-1}(0)$ have the same Euler characteristic (Remark \ref{r:euler}),  we get: 
\[  (-1)^{n} \rank H_{n}(\bC^{n}, P_{\cH}^{-1}(D_{0}) ) =    \chi (\bC^{n}, P_{\cH}^{-1}(D_{0}) )=  1- \chi (P_{\cH}^{-1}(D_{0}) ) = 1 - \chi(V \m \cH).
\]
Consider  the germ of a  pencil $\cP_\delta$ of hyperplanes  of $\bP^{n}$  parametrized by an arbitrarily small disk $\delta\subset \bC \subset \bP^{1}$ centred at 0,
where $\pi : \cP_\delta \m A \to \delta$ is the projection to the parameter, which contains our admissible hyperplane $\cH$ and such that 
$\pi(\cH) = 0$.
We require that $\cP_\delta$ is generic with respect to $V$, in the sense that the axis $A$ of this pencil $\cP_\delta$ (which is of dimension $n-2$) is transversal to the Whitney stratification $\bP \cW$ of  $V\subset \bP^{n}$, and more precisely transversal to the induced stratification  $\bP \cW_{H}$ on the slice $V\cap \cH$.  The choice of the axis covers a Zariski-open subset of all hyperplane slices of $V\cap \cH$.

 Since the  general member $\cH_{\gen}$ of this pencil germ is a general hyperplane with respect to $V$,  by \eqref{eq:pollink} we have:
\[   (-1)^{n} \pol(V)= 1 - \chi(V \m \cH_{\gen}).
\]
Taking the difference, we obtain:
\[ \begin{array}{l}
(-1)^{n} [ \pol(V) - \rank H_{n}(\bC^{n}, P_{\cH}^{-1}(D_{0}) ) ] = \\
\hspace*{2cm}
= \chi(V \m \cH) - \chi(V \m \cH_{\gen})  = \chi(V \cap \cH_{\gen}) - \chi(V \cap \cH). 
\end{array}\]


\medskip
Since the axis $A$ of the pencil $\cP_\delta$  is stratified-transversal to $\bP \cW$ and the stratified singularities of the pencil $\cP_\delta$ outside $A$ are precisely  the set of points of non-transversality $\Sing_{\bP \cW} (V\cap \cH)$ of \eqref{eq:star}, it follows that
 the variation of the topology of the pencil $\cP_\delta$  at  its fibre $\cH$ is localizable, by excision, at the points $q \in \Sing_{\bP \cW} (V \cap \cH)$, the pencil being equisingular at the other points of $V\cap \cH$.
 
  In homology, this variation is concentrated in dimension $n-1$, and its contribution is the number  
 $\alpha_q(V,\cH)$ defined at \eqref{ss:sectionalM}.

More precisely, for some small enough ball $B_{q}$ at $q\in V\cap \cH$, we have the following trivial, respectively locally trivial, 
fibrations induced by pencil projection $\pi$, where $\cP_\delta^{*} := \pi^{-1}(\delta\m \{0\})$, and where the radius of $\delta$ is much smaller than the radii of the balls $B_{q}$ such that they are Milnor data of the function $\pi$ at each point $q$:
\[   V\cap (\cP_\delta \m A) \m \bigsqcup_{q \in  \Sing_{\bP \cW}(V \cap \cH)} B_{q} \longrightarrow \delta  \ \ \mbox{ and } \ \    B_{q} \cap V\cap \cP_\delta^{*} \longrightarrow \delta\m \{0\} 
\]
Since $B_{q} \cap V\cap \cP_{\delta}$ is contractible for all $q\in \Sing_{\bP \cW}(V \cap \cH)$, and since  $V\cap \cP_\delta$ contracts to $V \cap \cH$, 
we get, by excising the complement of $\bigsqcup_{q}B_{q}$ in the pair $(V\cap \cP_\delta , V \cap  \cH_{\gen})$:

\[ 
\begin{split}
   \chi(V \cap  \cH_{\gen}) - \chi(V \cap \cH)
 = - \sum_{q \in  \Sing_{\bP \cW}(V \cap \cH)} \chi(B_{q} \cap V \cap \cP_{\delta}, B_{q} \cap V \cap \cH_{\gen}) \\
 = (-1)^{n}\sum_{q \in  \Sing_{\bP \cW}(V \cap \cH)} \alpha_q(V,\cH)
\end{split}
\]
From this we obtain:
\[ \beta (V,\cH) = \rank H_{n}(\bC^{n}, P_{\cH}^{-1}(D_{0}) ) = \pol(V) - \alpha (V,\cH).
\]
This ends the proof of our formula.
\end{proof}

\medskip


One may derive lower bounds for the polar degree from \eqref{eq:alpha} either from the contribution of $\alpha(V,\cH)$ or  from that of $\beta(V,\cH)$. We refer to \S\ref{ss:computpolar} and \S\ref{ss:lower1}  for more details on lower bounds, and we give below some examples with explicit computations of $\alpha(V,\cH)$ and $\beta(V,\cH)$.

 
\medskip



 \subsection{On the effective computability of the numbers $\alpha(V,\cH)$ and $\beta(V,\cH)$.}\label{ss:computpolar}\
 
 Our formula \eqref{eq:alpha} compared  to Huh's \eqref{eq:huhkey},  and to Dimca-Papadima's \eqref{eq:red}, 
amounts to a quantization of the relative homology group  $H_{n}(\bP^{n} \m V, (\bP^{n} \m V)\cap \cH)$ into localized numerical invariants depending on the admissible hyperplane $\cH$. Each local invariant is the number of solutions of a system of algebraic equations, and therefore computable, with help of adequate software, as follows. 
 
 The  local Milnor-L\^{e} numbers $\alpha_p(V,\cH)$ which compose the term $\alpha(V,\cH)$ of \eqref{eq:alpha}  are computable by the formula \eqref{eq:polarMilnor} as local polar multiplicities at each point $p\in V\cap \cH$. The more recent paper \cite{ST-polar1} shows how the  numbers $\alpha_p(V,\cH)$ occur in the particular case of $\dim \Sing V =1$.

The number $\beta^\aff(V,\cH)$ represents the total Milnor number of the polynomial $P_{\cH}$ outside the fibre $P_{\cH}^{-1}(0)$, and is computable algebraically.  
  
\smallskip

The number $\beta^\ity(V,\cH)$ is the sum of polar intersection multiplicities at points at infinity, which are computable by \eqref{eq:mu-int}.

 Moreover, due to the positivity characterisation \eqref{eq:lambda(p,t)2}, and by the formula \eqref{eq:alpha}, the number of $t$-singularities at infinity of a $\cT_{0}$-type polynomial outside its fibre over 0  provides a lower bound for $\pol(V)$.
In order to detect the set of $t$-singularities at infinity of a $\cT_{0}$-type polynomial $P$ outside $P^{-1}(0)$ by using the characterisation \eqref{eq:lambda(p,t)},  we may do as follows. According to \cite[Prop. 2.1.3]{Ti-book}, in any chart $U_{i} := \{x_{i}\not= 0\}$ we consider the affine polar locus $\Gamma_{(p,t)} (x_{n}, \tau)^{(i)}$, and then we have for it the equality of polar germs at the point $(p,t)\in \bX^{\ity}$ (in the notations of \S\ref{ss:jumpstinfinity}):
\[ \Gamma_{(p,t)} (x_{n}, \tau)^{(i)} = \overline{\Gamma}_{(p,t)}(x_{i}, P)\]
The affine polar curve $ \overline{\Gamma}_{(p,t)}(x_{i}, P)$ is defined by a number of algebraic equations, and  the asymptotic values $t\not= 0$ as well as the points $(p,t)$ can be found effectively, as done in \cite{JT}, \cite{DTT}. 
 
These methods of computation are used in the examples displayed below.

 \medskip

\subsection{Contributions of types $\alpha(V,\cH)$ and $\beta(V,\cH)$.}\label{ex:betacontrib}

\begin{example}[Hypersurfaces with isolated singularities]
Our first example is the hypersurface 
$$V:= \{ x (xy+z^{2}) = 0\} \subset \bP^{2}.$$
We compute its polar degree by using the formula \eqref{eq:alpha} of Theorem \ref{t:polformula}. It has a single isolated  singularity at the point $[0;1;0]$, which is in particular the single \emph{special point}\footnote{See the next section for the definition of special points.} of $V$.

Let $\cH := \{z=0\} \subset \bP^{2}$. Then $V\m \cH = P_{\cH}^{-1}(0)$ for $P_{\cH}(x,y) = x (xy+1)$. This polynomial has an interesting behaviour at infinity,  well-known in the literature (e.g. \cite{Br}, \cite{ST-singinf}), and therefore we will extract here some results without giving the full details.  The polynomial $P_{\cH}$ has no affine critical points, hence  $\beta^\aff(V,\cH) =0$. At infinity, $P_{\cH}$ may have $t$-singularities only on the line $\{[0;1;0]\}\times \bC$, and they are  isolated. It turns out that  the unique isolated $t$-singularity at infinity of $P$ is  the point $\{[0;1;0]\}\times \{0\}$.
Since this is on $\bX_{0}$, it does not count in our computation of vanishing cycles  (cf Theorem \ref{p:F-concentration}), and therefore  $\beta^{\infty}(V,\cH)=0$. 

On the other hand, by computing the local polar curve at the special point $p= [0:1:0] \in V$ we get  $\alpha_{p}(V,\cH) = \alpha_{p}(V)=1$. By applying Theorem \ref{t:polformula} we get $\pol (V) = 1$.  
 
 \smallskip
Let us  now  consider  the modified projective curve 
$$V':= \{ x (xy+z^{2})  - bz^{3}= 0\} \subset \bP^{2}$$
 for $b \ne 0$, having a unique singularity at $p=[0;1;0]$.
Then $V'\m \cH$ is the fibre over 0 of  the polynomial $Q_{\cH}(x,y) = x (xy+1) - b$,  and $Q_{\cH}$
has still no affine critical points (thus  $\beta^\aff(V',\cH) =0$). But this time $Q_{\cH}$ has its single isolated $t$-singularity at infinity at the point $\{[0;1;0]\}\times \{b\}$ which is not on $\overline{Q_{\cH}^{-1}(0)}$, and  we obtain $\beta^{\infty}(V',\cH) =1$. This can actually be computed (see  \S\ref{ss:singinfty} for the notations, and \cite{ST-singinf}, \cite{Ti-book} for the details) as the jump $A_{2} \mapsto A_{3}$ of the local type of the singularity of the fibre $\bX_{t}$ at the point $[0;1;0] \in \bX^\ity$ at the value $t=b$. 

As above, we also get $\alpha_{p}(V',\cH) =1$ from the unique special point  $p\in V'$. By applying Theorem \ref{t:polformula} we now have  $\pol (V') = 1+1 =2$. 
\end{example}
\smallskip

\begin{example}[Non-isolated singularities.]\label{ex:alphacontrib}

We consider the projective surface 
$$V :=  \{f=  x^{2}z + xyw +y^{3} = 0\} \subset \bP^{3}$$ 
with singular locus the line $L:= \{ x=y=0\}$.  This is mentioned in \cite[\S 3.2]{CRS} as the rational scroll $Y(1,2)$, and it is related to a sub-Hankel surface
\cite[Example 4.7]{CRS}.  It is known to be a  homaloidal surface. Our approach to compute $\pol(V)=1$  goes as follows.

The coordinates of $\bP^{3}$ are $[x;y;z;w]$
and we consider the hyperplane $\cH := \{ \hat l = w = 0\}$. 
Let us first remark that $\cH$ is admissible since it has a tangency at the point $p :=  [0;0;1;0] \in L$ only, and since the polar locus $\Gamma(\hat l, f)$ is empty.  We may therefore apply Theorem \ref{t:polformula} provided that we compute its ingredients $\alpha$ and $\beta$, which we shall do in the following.

We first compute $\alpha(\cH, V)$. The line $L$ has a generic transversal type $A_{1}$, which actually holds at all points, except of the point $p=  [0;0;1;0] \in L$ which is a non-isolated singularity of type $J_{2, \infty}$.
This point $p$ is therefore  the only candidate as a \emph{special point} or $V$  (see the next section for the definition), so let us check if $p$ is special or not. 
In the chart $\{ z =1\}$, we have  $f_{p} := f_{\{ z =1\}} = x^{2} + xyw +y^{3}$,  and $H_{0}$ denotes a local representative of the projective hyperplane $\cH$. The polar locus $\Gamma(w, f_{p})$ is a curve, 
  and by solving equations we get its parametric presentation as: $(-\frac1{12}w^{3}, \frac1{6}w^{2}, w)\in \bC^{3}$ for $w\in \bC$. The intersection multiplicity at the origin  $\int_{0}(H_{0}, \Gamma(w, f_{p})$, which by definition is $\alpha_{p}(\cH, V)$,  is equal to 1.  It is thus equal to $\mult_{0}\Gamma(w, f_{p})$, and also equal to the generic Milnor number $\alpha_{p}(V)$. Therefore $p$ is indeed the unique special point of $V$. 

We next show that $\beta(V,\cH) =0$. Our $\cH$ is the hyperplane ``at infinity'' and the corresponding polynomial
$P$ defined on $\bC^{3} = \bP^{3}\m \cH$ is $P_{\cH}(x,y,z) = x^{2}z + xy + y^{3}$. This polynomial has no singularities outside 
$P_{\cH}^{-1}(0)$, thus $\beta^\aff(V,\cH) =0$. Next,  by Corollary \ref{l:isolated}, $t$-singularities at infinity occur only on the line $\{p\} \times \bC \subset \bX^{\ity}$, and that they are isolated.
We then use \S\ref{ss:computpolar} to detect them. We compute $\Gamma(z, P_{\cH})$, which is a curve.  The restriction  of $P_{\cH}$ on it detects asymptotically all the values $(p,t)$ which are the $t$-singularities at infinity. In our case 
the only asymptotical value is $t=0$, but this is not outside $\overline{P_{\cH}^{-1}(0)}$. So there are no 
isolated $t$-singularities at all, and thus $\beta^\infty(V,\cH) =0$.  Theorem \ref{t:polformula} then gives $\pol(V) = 1+0 = 1$.

\smallskip

Let us consider the cubic surface:  
$$V :=  \{f = x^2 z + y^2w = 0\} \subset \bP^3$$ 
with $\pol(V) =2$. Its singular locus is the same line $L:= \{ x=y=0\}$ of transversal type $A_{1}$, now except of two $D_{\infty}$-points $p=[0;0;1;0]$ and $q= [0;0;0;1]$.  By similar computations as above, it turns out that these two points are the only \emph{special points} of $V$ (see the next section for the definition), with 
 $\alpha_p(V) = \alpha_q(V)=1$.  
 
The  above hyperplane $\{w = 0\}$ is no more admissible since its  non-transversality locus is a line. We chose now $\cH := \{w -x-y = 0\}$. In order to compute the invariants, we apply a linear change of variables such that $\hat l$ is the new variable  $w$ and the others do not move.   In this new system of coordinates we have $V := \{f = x^2 z + y^2w -xy^{2} - y^{3}= 0\}$ and $\cH := \{ \hat l = w =0\}$ with a single non-transversality point at $p= [0;0;1;0]$.

 By using \S\ref{ss:computpolar} in the chart $z=1$, we get $\alpha_{p}(V, \cH)=1$,  thus we have $\alpha(V, \cH)=1$.
 
 For detecting $\beta(V,\cH)$ we use again \S\ref{ss:computpolar}. We consider 
 the polynomial $P_{\cH}(x,y,z) = x^{2}z + y^{2}-xy^{2} - y^{3}$ on $\bC^{3} = \bP^{3}\m \cH$ and check that it has no singularities outside the fibre $P_{\cH}^{-1}(0)$, thus $\beta^{\aff}(V,\cH) =0$.
 To find the $t$-singularities at infinity outside $\overline{P_{\cH}^{-1}(0)}$, we compute the affine polar curve $\Gamma(z,P_{\cH})$. It has the parametrization $(1-\frac3{2}y,  y, \frac{y^{2}}{2-3y})\in \bC^{3}$ for $y\in \bC$. We compute the asymptotic values of the restriction $ P_{|\Gamma}$ for all branches which converge to infinity, and we find that the only finite value different from 0  is  $\lim_{y\to \frac23} P_{|\Gamma}= \frac 4{27}$. The point $(p, \frac 4{27})\in \bX^{\ity}$ is therefore the unique $t$-singularity at infinity outside $P_{\cH}^{-1}(0)$. We then compute the local intersection multiplicity $\int_{(p, \frac 4{27})} (\overline{\Gamma}_{(p,t)}(z, P_{\cH}), \bX_{\frac 4{27}})$ and find that its value is 1.
 
   This yields $\beta^{\infty}(V,\cH)=1$, hence we get $\beta(V,\cH) =1$.  Finally $\pol (V) = 1+1=2$ by Theorem \ref{t:polformula}.

\end{example}
 

\medskip


\section{Special points,  and lower bounds for the polar degree}\label{s:specV} 

\subsection{The definition of special points of $V$.}\label{ss:specialpoints}
Recalling that the local numerical invariant $\alpha_p(V)$ is defined in \S\ref{ss:sectionalM}, let us introduce the following:

\smallskip
\begin{definition}[Special points of $V$]\label{d:special}
We say that $p\in \Sing (V)$ is a {\it special point of $V$} if  $\alpha_p(V) > 0$.
\end{definition}

\begin{remark}
 The notion of ``special point'' can be defined not only for a hypersurface $V$ in $\bP^{n}$ but for any singular complex space $X$ of pure dimension $n-1$ such that the shifted constant sheaf $\underline{\bQ}_X[n-1]$ is a $\bQ$-perverse sheaf on $X$.
\end{remark}
  
 The next result tells that the set of all special points of $V$ is an intrinsic invariant.
 
\begin{proposition}\label{p:specialpoints}
 The set $V_{\spec}\subset V$ of special points is a finite subset of $V$ which is independent of the stratification of $V$.
  In particular,  $V_{\spec}$ is included in the set of point-strata of the coarsest Whitney stratification of $V$.
\end{proposition}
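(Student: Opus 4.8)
The plan is to show that the value $\alpha_{p}(V)$ is an intrinsic local invariant of the germ $(V,p)$, then to prove that it can be positive only at the $0$-dimensional strata of the coarsest Whitney stratification; finiteness and stratification-independence then follow at once. First I would record that $\alpha_{p}(V)$ does not refer to any stratification: by \S\ref{ss:sectionalM} and Remark \ref{r:reducedstructure} it is the Milnor number of the complex link of $V$ at $p$, equivalently the generic polar multiplicity $\mult_{p}\Gamma(l_{\gen},f_{p})$ computed from a local reduced equation $f_{p}$ and a \emph{generic} linear form $l$ with $l(p)=0$, cf.\ \eqref{eq:polarMilnor}. In particular $\alpha_{p}(V)$ depends only on $(V,p)$, and it vanishes when $p$ is a smooth point of $V$ (the complex link is then contractible), so any point with $\alpha_{p}(V)>0$ lies in $\Sing V$.

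The key step is the dichotomy: \emph{if $p$ lies in a stratum $S$ of some Whitney stratification $\bP\cW$ of $V$ with $\dim S\ge 1$, then $\alpha_{p}(V)=0$}. I would prove this from Whitney condition (a). Recall that a point $y\in V_{\reg}$ belongs to $\Gamma(l,f_{p})$ exactly when $\nabla f_{p}(y)$ is proportional to $\nabla l$, i.e.\ when the tangent hyperplane satisfies $T_{y}V=\ker dl=:H$; thus $\Gamma(l,f_{p})$ is the closure of $\{y\in V_{\reg}:T_{y}V=H\}$. Since $p\in\Sing V$, if the germ $\Gamma(l,f_{p})$ were non-empty at $p$ then $p$ would be a limit of points $y_{i}\in V_{\reg}$ with $T_{y_{i}}V=H$, so $H$ would be a limit of tangent hyperplanes along $y_{i}\to p$. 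By Whitney (a) for the stratum $S$, every such limit hyperplane contains $T_{p}S$, whence $H\supseteq T_{p}S$. But $\dim T_{p}S\ge 1$, and for a \emph{generic} $l$ the hyperplane $H=\ker dl$ does not contain the fixed positive-dimensional space $T_{p}S$ (containment is a closed condition of codimension $\ge 1$ on linear forms). Hence for generic $l$ the germ $\Gamma(l,f_{p})$ is empty at $p$, the intersection number in \eqref{eq:polarMilnor} vanishes, and $\alpha_{p}(V)=0$.

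With the dichotomy in hand the proposition follows. I would take the coarsest Whitney stratification of $V$, which is canonical and, $V$ being algebraic, has finitely many strata. If $\alpha_{p}(V)>0$, then by the contrapositive of the dichotomy applied to this stratification the stratum containing $p$ must be $0$-dimensional, so $p$ is a point-stratum of the coarsest stratification. This gives the inclusion $V_{\spec}\subseteq\{\text{point-strata of the coarsest stratification}\}$, whence $V_{\spec}$ is finite. The same argument applied to an arbitrary Whitney stratification shows that a point with $\alpha_{p}(V)>0$ is a $0$-dimensional stratum in \emph{every} Whitney stratification; thus the requirement in Definition \ref{d:special} that $p$ be a stratified isolated singularity is automatic, and $V_{\spec}=\{p\in V:\alpha_{p}(V)>0\}$ is characterised purely by the intrinsic invariant $\alpha_{p}(V)$, hence is independent of the chosen stratification.

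The main obstacle I expect is matching the tangent-hyperplane description of $\Gamma(l,f_{p})$ to the intersection-multiplicity definition of $\alpha_{p}(V)$: one must verify that the ``generic $l$'' appearing in the definition of $\alpha_{p}(V)$ may indeed be chosen to avoid the codimension-$\ge 1$ condition $T_{p}S\subseteq\ker dl$, and that emptiness of the polar germ at $p$ forces the number \eqref{eq:polarMilnor} to be zero. Both are routine once the description of the polar locus and Whitney (a) are set up, but they are the points requiring care.
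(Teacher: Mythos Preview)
Your argument contains a genuine gap in the description of the polar locus. You assert that ``$\Gamma(l,f_{p})$ is the closure of $\{y\in V_{\reg}:T_{y}V=H\}$'', but this is not correct: the polar locus is a subset of the ambient space $\bC^{n}$, not of $V$. By its definition (see e.g.\ the display in the proof of Lemma~\ref{l:genpolar}), $\Gamma(l,f_{p})$ is the closure of the set of \emph{regular points of the function} $f_{p}$ where $\nabla f_{p}(y)$ is parallel to $\nabla l$, i.e.\ where the tangent hyperplane to the \emph{level set} $\{f_{p}=f_{p}(y)\}$ equals $H$. A branch of $\Gamma(l,f_{p})$ through $p$ typically approaches $p$ through points with $f_{p}(y)\neq 0$, hence entirely outside $V$; for instance, at an $A_{1}$ point $f_{p}=x_{1}^{2}+\cdots+x_{n}^{2}$ the polar curve for $l=x_{1}$ is the $x_{1}$-axis, which meets $V$ only at the origin. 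Consequently there is no reason the approaching points lie in $V_{\reg}$, and Whitney~(a) for the pair $(V_{\reg},S)$ says nothing about the limit of these tangent hyperplanes.

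Your strategy can be repaired by replacing Whitney~(a) with the Thom $\mathrm{a}_{f_{p}}$-condition, which controls precisely limits of tangent hyperplanes to the \emph{fibres} of $f_{p}$: by Theorem~\ref{t:**} any Whitney stratification of $\{f_{p}=0\}$ is Thom $\mathrm{a}_{f_{p}}$-regular, so a limit along $y\to p$ of $T_{y}\{f_{p}=f_{p}(y)\}$ must contain $T_{p}S$, and then your genericity argument goes through unchanged. The paper's own proof is shorter and avoids polar-curve bookkeeping: it uses the description of $\alpha_{p}(V)$ as the Milnor number of the complex link of $V$ at $p$ and invokes the standard fact that this complex link is contractible whenever $p$ lies on a positive-dimensional Whitney stratum (a generic linear form is then a stratified submersion at $p$, so the Milnor--L\^{e} fibration is trivial and its fibre is a cone). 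Both routes encode the same transversality, but the paper's formulation makes the vanishing immediate.
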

\begin{proof}
If the point $p$ is on a stratum of dimension $>0$ of some Whitney stratification $\bP \cW$ of $V$, then $\alpha_p(V) = 0$ since the complex link of any stratum of positive dimension is contractible\footnote{This follows from the fact that $V$, at some point $p$ on a positive dimensional stratum of $\bP \cW$, is locally equivalent, up to homeomorphisms, to a product of the stratum with a transversal hyperplane slice. See e.g. \cite{GM} for all these well-known facts.}. Therefore only point-strata of  $\bP \cW$ can be special points (according to Definition \ref{d:special}), and these point-strata are finitely many since $V$ is compact.  It also  follows that only points $p$ which are point-strata of any Whitney stratification of $V$ can be special points, thus our last assertion is proved. 
\end{proof}

\begin{remark}
  In case $V$ has nonisolated singularities, the inclusion in the statement of Proposition \ref{p:specialpoints} might be strict. As a local example, consider  the Brian\c con-Speder \cite{BS} family  of hypersurface germs which is $\mu$-constant but not $\mu^{*}$-constant\footnote{The notation $\mu^{*}$ is due to Teissier, and ``$\mu^{*}$-constant'' means that all Milnor numbers of  the linear sections of all dimensions through some fixed point (here the point is the origin) are constant in the family.}; then the origin is a point-stratum in the  coarsest local Whitney stratification, but it is not a special point. 
\end{remark}

 In case $(V,p)$ is an (at most) \emph{isolated} singularity germ, we have $\alpha_p(V) =\mu_p(V \cap \cH_{\gen}) \ge 0$ , with annulation $\mu_p(V \cap \cH_{\gen}) = 0$ if and only if  $(V,p)$ is nonsingular\footnote{This is a well-known fact, see e.g.  \cite[Prop. 4.1]{Ti-tang} and more references therein.}. Therefore we get:
 \begin{corollary}\label{c:specisol}
 If $V$ has only isolated singularities, then $V_{\spec} = \Sing(V)$.
 \fin
\end{corollary}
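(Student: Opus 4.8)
The plan is to deduce Corollary \ref{c:specisol} directly from the just-established Proposition \ref{p:specialpoints} together with the characterisation of the sectional Milnor--L\^{e} number $\alpha_p(V)$ recalled in \S\ref{ss:sectionalM}. First I would record the hypothesis: if $V$ has only isolated singularities, then $\Sing(V)$ is a finite set of points, and the coarsest Whitney stratification of $V$ has exactly these points as its $0$-dimensional strata, all of $V\m \Sing(V)$ being the (single) top-dimensional smooth stratum. In particular every singular point $p$ is automatically a stratified isolated singular point, so the standing hypothesis of Definition \ref{d:special} is satisfied at each $p\in\Sing(V)$, and the question reduces entirely to deciding, point by point, when $\alpha_p(V)>0$.

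Next I would invoke the identity $\alpha_p(V) = \mu_p(V\cap \cH_{\gen})$, valid because for an isolated singularity germ $(V,p)$ a generic hyperplane $\cH_{\gen}$ through $p$ cuts out a germ whose complex link is the Milnor fibre of the generic hyperplane section, and $\alpha_p(V)$ is by definition the Milnor number of that complex link (cf.\ \S\ref{ss:sectionalM} and Remark \ref{r:reducedstructure}). The decisive input is then the standard fact that for an isolated hypersurface singularity one has $\mu_p(V\cap \cH_{\gen}) \ge 0$ with equality precisely when $(V,p)$ is nonsingular; this is exactly the statement cited from \cite[Prop.~4.1]{Ti-tang}. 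Combining these gives $\alpha_p(V)>0$ if and only if $p$ is an actual singular point of $V$.

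Assembling the two directions finishes the argument. If $p\in\Sing(V)$, then since $(V,p)$ is by hypothesis a genuine (nonempty) isolated singularity, the vanishing criterion forces $\alpha_p(V)>0$, so $p\in V_{\spec}$ by Definition \ref{d:special}; hence $\Sing(V)\subset V_{\spec}$. Conversely, by Proposition \ref{p:specialpoints} every special point is a point-stratum of the coarsest Whitney stratification, which in the isolated-singularity case means $V_{\spec}\subset \Sing(V)$. The two inclusions yield $V_{\spec}=\Sing(V)$.

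There is essentially no obstacle here: the corollary is a clean specialisation of the preceding proposition, and the only nontrivial ingredient is the classical positivity-and-vanishing statement for the generic sectional Milnor number, which the excerpt already quotes as a known fact. The one point deserving a moment's care is checking that, under the isolated-singularity hypothesis, the two notions of ``point-stratum'' and ``singular point'' genuinely coincide for the coarsest stratification, so that Proposition \ref{p:specialpoints} gives the reverse inclusion without slack; but this is immediate since the smooth locus of $V$ forms one stratum and each isolated singular point must be its own $0$-dimensional stratum.
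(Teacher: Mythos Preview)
Your proof is correct and follows essentially the same route as the paper: the paper's argument is precisely the sentence preceding the corollary, namely the identification $\alpha_p(V)=\mu_p(V\cap\cH_{\gen})$ together with the classical fact that this vanishes exactly when $(V,p)$ is smooth. One small simplification: the inclusion $V_{\spec}\subset\Sing(V)$ is already immediate from Definition~\ref{d:special} (special points are by definition singular points), so invoking Proposition~\ref{p:specialpoints} for that direction is more than you need.
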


%


\subsection{On lower bounds for the polar degree.}\label{ss:lower1}\

We have seen in Remark \ref{c:Zar} that the set of admissible hyperplanes for $f$ at some point $p\in \Sing V$ is a Zariski-open subset of the set of all hyperplanes through $p$. Therefore, if  $\cH$ is admissible for $f$ at $p\in \Sing V$, then we obtain from Theorem  \ref{t:polformula}:
\begin{equation}\label{eq:alphabound}
 \pol(V)  \ge \alpha_p(V,\cH) + \beta(V,\cH) \ge \alpha_p(V,\cH) \ge \alpha_p(V)  \ge 0
\end{equation}
 where the last inequality reads ``$>0$'' if and only if $p$ is a special point of $V$. 
 
\medskip


The following statement is a consequence of  Theorem  \ref{t:polformula} via the inequality \eqref{eq:alphabound}:
\begin{corollary}\label{c:admissibleandspecial}
Let $V\subset \bP^{n}$ be a projective hypersurface which is not a cone.  Then:

\begin{equation}\label{eq:newbounds}
 \pol(V)  \ge \max_{p\in V_{\spec}}\alpha_p(V).
\end{equation}
\fin
\end{corollary}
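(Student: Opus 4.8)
The plan is to read off the estimate directly from the pointwise bound \eqref{eq:alphabound}, so that the corollary becomes a matter of producing, at each special point, a hyperplane that is admissible there. Since $V_{\spec}$ is finite (Proposition \ref{p:specialpoints}), the maximum on the right of \eqref{eq:newbounds} is attained, and it suffices to establish the single-point inequality $\pol(V) \ge \alpha_p(V)$ for an arbitrary $p \in V_{\spec}$ and then pass to the maximum over $p$; if $V_{\spec} = \emptyset$ the assertion is vacuous, since $\pol(V) \ge 0$ always holds.

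First I would fix $p \in V_{\spec}$ and note that the global hypothesis ``$V$ is not a cone'' forces in particular that $V$ is not a cone of apex $p$: saying that $V$ is a cone means that $V$ is a cone of apex $q$ for \emph{some} $q$, so its negation gives the non-conicality of apex $q$ for \emph{every} $q$, in particular for $q=p$. This is precisely the hypothesis needed to invoke Remark \ref{c:Zar}, which then guarantees that the hyperplanes admissible for $f$ at $p$ contain a Zariski-open dense subset of $\check \bP_{\{p\}}^{n}\simeq\check \bP^{n-1}$; in particular at least one admissible $\cH$ passing through $p$ exists. This existence step is the only substantive ingredient, and it rests on two things already proved: the Constrained Polar Curve Theorem \ref{t:mainpolar1}, which secures admissibility condition (ii), and the Zariski-openness of the isolated non-transversality condition \textbf{(*)}.

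With such an admissible $\cH$ fixed, I would simply apply the chain \eqref{eq:alphabound}, namely $\pol(V) \ge \alpha_p(V,\cH) + \beta(V,\cH) \ge \alpha_p(V,\cH) \ge \alpha_p(V)$, whose first inequality is Theorem \ref{t:polformula} together with the non-negativity of $\beta(V,\cH)$ and of the remaining summands of $\alpha(V,\cH)$, and whose last inequality records the semicontinuity of the polar multiplicity \eqref{eq:polarMilnor} under specialisation of the linear form, the generic value $\alpha_p(V) = \alpha_p(V,\cH_{\gen})$ being the minimal one (cf.\ Remark \ref{r:reducedstructure}). Note that $\cH$ may well contain other special or singular points of $V$, but this does not interfere: their contributions are non-negative and are already discarded in \eqref{eq:alphabound}, so the bound through $p$ alone survives.

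Passing to the supremum over $p\in V_{\spec}$ then yields \eqref{eq:newbounds}. I do not expect a genuine obstacle here, as the inequalities are entirely packaged in \eqref{eq:alphabound}; the single point demanding care is the existence of an admissible hyperplane at each special point, which is exactly where the ``not a cone'' hypothesis is indispensable, and which Remark \ref{c:Zar} supplies.
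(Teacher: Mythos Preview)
Your proof is correct and follows essentially the same route as the paper: the corollary is stated there as an immediate consequence of Theorem~\ref{t:polformula} via the chain \eqref{eq:alphabound}, with the existence of an admissible hyperplane through each $p\in V_{\spec}$ supplied by Remark~\ref{c:Zar} under the non-cone hypothesis. Your write-up spells out a few points the paper leaves implicit (finiteness of $V_{\spec}$, the passage from ``not a cone'' to ``not a cone of apex $p$'', and the semicontinuity behind $\alpha_p(V,\cH)\ge\alpha_p(V)$), but the argument is the same.
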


\begin{remark}\label{r:pol=0}
Corollary \ref{c:admissibleandspecial} extends to any singular locus Huh's result \cite[Theorem 2]{Huh} that holds in the setting of $V$ with isolated singularities only.  More generally, if $V$ has non-isolated singularities and also isolated singular points, then our formula \eqref{eq:newbounds} implies that if $V$ is homaloidal, then its isolated singularities must be of type $A_{k}$, just as in Huh's setting. 

Let us show here what  Corollary \ref{c:admissibleandspecial} becomes in the particular case of $V$ with  isolated singularities only. 
A point $p\in \Sing V$ is then a special point of $V$ (by Corollary \ref{c:specisol}).  Since $\alpha_p(V) = \mu_{p}^{\langle n-2\rangle}(V)$,   Corollary  \ref{c:admissibleandspecial}  yields the inequality:
 
\begin{equation}\label{eq:huhbound2}
 \pol (V ) \ge  \max_{p\in \Sing(V)} \mu_{p}^{\langle n-2\rangle}(V)
\end{equation}
which  recovers the general formulation of Huh's result \cite[Theorem 2]{Huh},   stated in \eqref{eq:huhbound} in a particular form.
\end{remark}

\begin{remark}
 If  an admissible  hyperplane $\cH$  has an isolated stratified tangency to $V$ at $p\not\in V_{\spec}$,  then this tangency contributes to $\pol (V)$ with the positive summand $\alpha_p(V,\cH)$.   In case of a non-degenerate quadratic contact, this contribution is $1$, and in case of higher order tangency, it is  $2$ or more.
In particular  homaloidal hypersurfaces may have such isolated tangency points for admissible hyperplanes, but the orders of contact cannot be higher than 1.  
\end{remark}



\subsection{On  hypersurfaces with $\pol(V) =0$.}\label{ss:pol=0}

Let us observe that Huh's result \cite[Theorem 2]{Huh} does not apply in this case.  
Indeed, the following equivalence should be well-known: \emph{if $V$ has at most isolated singularities, then $\pol(V) =0$ if and only if $V$ is a cone}.  If $V$ has isolated singularities only and it is a cone,  then it has a unique apex $q$ which is moreover the unique singular point of $V$ (since if not so, then $V$ would have non-isolated singularities in both cases).  Precisely this case ``$V$ is a cone of apex $q$'' is excluded in Huh's result \cite[Theorem 2]{Huh}.

In the same setting of $V$ with isolated singularities only,  our formula  \eqref{eq:alphabound}  does not apply in the 
case $\pol(V) =0$ either. The reason is that  the generic hyperplanes through  the unique apex $q$ of the cone $V$ are not admissible. Indeed, if the cone $V$ is not smooth, then its apex is an isolated singular point, thus a special point.  For a generic linear $l$ form in the slice $K = \{x_{0}=1\}$ at this point $q$,  the polar locus $\Gamma(l,g)$ is of dimension 1 due to the fact the the apex $q$ is an isolated singularity of $V$.  By Lemma \ref{l:nongenpolar}, it  then follows that $\dim \Gamma(\hat l,f) = 2$. This proves our claim that that all general hyperplanes $\cH$ through $q$ are non-admissible, and thus Theorem  \ref{t:polformula} does not apply.  

\begin{corollary}\label{c:coneornotcone}
 Let $V\subset \bP^{n}$ be any projective hypersurface with $\pol(V) =0$,   but not a cone. Then $V$  has no special points.
\end{corollary}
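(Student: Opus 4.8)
The plan is to argue by contraposition and let all the heavy machinery developed earlier do the work; concretely, I will show that if $V$ is not a cone and possesses a special point, then $\pol(V)>0$. So suppose, for contradiction, that $V$ is not a cone and that there is a special point $p\in V_{\spec}$. Since $V$ is not a cone, it is in particular not a cone of apex $p$, so the hypothesis of Remark \ref{c:Zar} is met: the set of hyperplanes admissible for $f$ at $p$ contains a Zariski-open dense subset of the set $\check\bP_{\{p\}}^{n}$ of hyperplanes through $p$. In particular there exists a hyperplane $\cH$ that is admissible for $f$ at $p$, and this is the only place where the ``not a cone'' assumption is genuinely used.

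Having fixed such an admissible $\cH$ through $p$, I would invoke the lower bound \eqref{eq:alphabound}, which gives $\pol(V)\ge \alpha_p(V,\cH)\ge \alpha_p(V)$. Because $p$ is a special point, Definition \ref{d:special} yields $\alpha_p(V)>0$, whence $\pol(V)>0$, contradicting the standing hypothesis $\pol(V)=0$. Therefore no special point can exist, which is exactly the assertion. Equivalently, and even more directly, one may simply apply Corollary \ref{c:admissibleandspecial}: for a non-cone $V$ it gives $\pol(V)\ge \max_{p\in V_{\spec}}\alpha_p(V)$, and the presence of any special point would force the right-hand side to be strictly positive.

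There is no real analytic obstacle here: the statement is essentially a packaging of results already established. The one point deserving care — and the only conceptual content — is the existence of an admissible hyperplane through the putative special point $p$. This is exactly what the Constrained Polar Curve Theorem \ref{t:mainpolar1} and its consequence Remark \ref{c:Zar} supply, and it is precisely this step that breaks down for cones: as explained in \S\ref{ss:pol=0}, at the apex $q$ of a cone the polar locus $\Gamma(\hat l,f)$ has dimension $2$ for generic $\hat l$, so the generic hyperplanes through $q$ violate admissibility condition (ii), and Theorem \ref{t:polformula} cannot be brought to bear. Thus the non-cone hypothesis is not a technical convenience but is what guarantees the admissible slice needed to detect the special point, and I would flag this explicitly to make clear why the result is false for cones.
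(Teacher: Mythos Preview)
Your proof is correct and follows essentially the same route as the paper: use the non-cone assumption via Remark \ref{c:Zar} (resting on Theorem \ref{t:mainpolar1}) to obtain an admissible hyperplane through the putative special point, then invoke Theorem \ref{t:polformula} (or its consequence \eqref{eq:alphabound}/Corollary \ref{c:admissibleandspecial}) to deduce $\pol(V)\ge\alpha_p(V)>0$, a contradiction. Your additional commentary on why the cone case fails matches the discussion in \S\ref{ss:pol=0}.
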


\begin{proof}
Since $p\in \Sing V$ is not an apex of a cone,  Theorem \ref{t:mainpolar1} shows, via Remark \ref{c:Zar}, that there  is a Zariski-open set of admissible hyperplanes $\cH$ through $p$. Therefore Theorem \ref{t:polformula} applies and yields the inequality:
\begin{equation}\label{eq:pol-alphap}
  \pol(V) \ge  \alpha_{p}(V) \ge 0,
\end{equation}
 where $\alpha_{p}(V) > 0$ whenever  $p\in V_{\spec}$, by definition. But this  contradicts our assumption  $\pol(V) =0$. 
\end{proof}

We end by an interesting example of a projective hypersurface which is not a cone, but has polar degree equal to 0. It has been believed long ago (see  \cite{Hes51, Hes59}) that $\pol(V) = 0$ implies ``$V$ is a cone''. Well-known to be true for $V$ with isolated singularities, this is not true whenever $V$ has nonisolated singularities, cf  \cite{GN76}.

\begin{example}\label{ex:notcone}
The projective hypersurface 
$$V := \{f=  z_{3}^{d-1}z_{0} + z_{3}^{d-2}z_{4}z_{1} + z_{4}^{d-1}z_{2} =0\}\subset \bP^{4},$$ 
 of degree $d \ge 3$,   is a well known example of a  hypersurface with vanishing Hessian, thus $\pol(V)=0$,  which is not a cone,  see e.g.  \cite{Huh}.
 
Its singular locus is the 2-plane $\Sing V = \{ z_{3} = z_{4} =0\}\subset \bP^{4}$.  This plane contains a curve $C$ of degree $d-1$ where the transversal type changes. This curve $C$  has a single singularity at the point $p= [0;0;1;0;0]$ of cusp type.

In the affine chart $\{z_{2}=1\} \simeq \bC^{4}$,  we compute the generic affine polar curve $\Gamma(l, g)$ where $g:= f_{| \{z_{2}=1\}}$ and $l: \bC^{4}\to \bC$ is a generic linear form. We get that $\dim \Gamma(l, g)=1$ and that $\Gamma(l, g)$ intersects $C$, 
but not at the point $p$ (which represents the origin of this chart). This shows that $p$ is not a special point of $V$. 
Repeating the computation at the intersection points $\Gamma(l, g) \cap \Sing V$ by using local generic linear forms at each such point, instead of the the affine generic $l$, we find that there are no local polar curves, and therefore these points are not special points either.  Altogether this confirms  Corollary \ref{c:coneornotcone}. 

 Although the generic affine polar curve in this chart is empty at the origin, there  are relatively generic hyperplanes $\cH = \{\hat l =0\}$ containing the singular point $p$ and satisfying condition \eqref{eq:star}. But all these are \emph{not admissible} because the polar locus $\Gamma(\hat l , f)$ has dimension 2, and therefore Theorem  \ref{t:polformula} does not apply.
\end{example}





\end{document}